\newtheorem{theorem}{Theorem}
\theoremstyle{plain}
\newtheorem{claim}{Claim}
\newtheorem*{corollary}{Corollary}
\newtheorem*{proposition}{Proposition}
\newtheorem{lemma}{Lemma}
\newtheorem*{algorithm}{Algorithm}
\newtheorem*{keller}{Theorem (Keller (1984))}
\newtheorem*{theorem3}{Theorem 3}
\theoremstyle{definition}
\newtheorem*{definition}{Definition}
\theoremstyle{remark}
\newtheorem*{remark}{Remark}
\newtheorem*{remarks}{Remarks}
\newtheorem*{example}{Example}
\newtheorem*{acknowledgements}{Acknowledgements}
\newcommand{\R}{\mathbb{R}}
\newcommand{\Z}{\mathbb{Z}}
\newcommand{\BV}{\mathrm{BV}}
\renewcommand{\P}{\mathcal{P}}
\renewcommand{\S}{\mathcal{S}}
\newcommand{\B}{\mathcal{B}}
\newcommand{\J}{\mathcal{J}}
\newcommand{\T}{\mathcal{T}}
\newcommand{\esssup}{\mathop{\mathrm{ess\,sup}}}
\newcommand{\essinf}{\mathop{\mathrm{ess\,inf}}}
\newcommand{\var}{\mathop{\mathrm{var}}}
\newcommand{\spec}{\mathop{\mathrm{spec}}}
\newcommand{\pp}{\ a.e.\: }
\renewcommand{\mod}[1]{\ (\mathrm{mod}\ #1)}
\newcommand{\U}[2]{U_{#1}(#2)} % Eigenspaces of the limiting symmetrized operator
\newcommand{\V}[2]{V_{#1}(#2)} % Flag of nested subspaces
\newcommand{\W}[2]{W_{#1}(#2)} % Oseledets spaces
\newcommand{\Wpush}[3]{W_{#1}^{(#3)}(#2)} % Image of V under action of A^(n)
\begin{document}

\title[Coherent structures for Perron--Frobenius cocycles]
{Coherent structures and isolated spectrum \\
for Perron--Frobenius cocycles}

\subjclass[2000]{
Primary 37M25; %Computational methods for ergodic theory 
Secondary 37H15, %Multiplicative ergodic theory, Lyapunov exponents
37C60, %Nonautonomous smooth dynamical systems 
37D20. %Uniformly hyperbolic systems
}
\date{9th April 2008}

\author{Gary Froyland}
\address{School of Mathematics and Statistics, 
University of New South Wales, Sydney, NSW 2052, AUSTRALIA}
\email{g.froyland@unsw.edu.au}

\author{Simon Lloyd}
\address{School of Mathematics and Statistics, 
University of New South Wales, Sydney, NSW 2052, AUSTRALIA}
\email{s.lloyd@unsw.edu.au}

\author{Anthony Quas}
\address{Department of Mathematics and Statistics,
University of Victoria, Victoria, BC V8W 3R4, CANADA}
\email{aquas@uvic.ca}

\begin{abstract}
We present an analysis of one-dimensional models of dynamical
systems that possess ``coherent structures'';  global structures
that disperse more slowly than local trajectory separation.  We
study cocycles generated by expanding interval maps and the rates of
decay for functions of bounded variation under the action of the
associated Perron--Frobenius cocycles.

We prove that when the generators are piecewise affine and share a
common Markov partition, the Lyapunov spectrum of the
Perron--Frobenius cocycle has at most finitely many isolated points.
Moreover, we develop a strengthened version of the Multiplicative
Ergodic Theorem for non-invertible matrices and construct an
invariant splitting into Oseledets subspaces.

We detail examples of cocycles of expanding maps with isolated
Lyapunov spectrum and calculate the Oseledets subspaces, which lead
to an identification of the underlying coherent structures.

Our constructions generalise the notions of almost-invariant and
almost-cyclic sets to non-autonomous dynamical systems and provide a
new ensemble-based formalism for coherent structures in
one-dimensional non-autonomous dynamics.
\end{abstract}

\maketitle
\section{Introduction}
Transport and mixing processes play an important role in many natural
phenomena and their mathematical analysis has received considerable
attention in the last two decades.  The \emph{geometric approach} to
transport includes the study of invariant manifolds, which may act as
barriers to particle transport and inhibit mixing.  So-called
\emph{Lagrangian coherent structures} were introduced (\cite{HY00,H01}) 
as finite-time proxies for invariant manifolds in non-autonomous
settings.  The \emph{ergodic-theoretic approach} to transport
includes the study of relaxation of initial ensemble densities to an
invariant density, with a special focus on initial densities that
relax more slowly than suggested by the rate of local trajectory
separation.  Such slowly decaying ensembles have been studied as
``strange eigenmodes'' (\cite{LH04,PP03,PPE07} in fluids and have 
been used to identify almost-invariant sets \cite{DJ99,F05,FP08,F08}). 
Until now, a suitable framework for the ergodic-theoretic approach 
that deals with truly non-autonomous dynamics has been lacking.  
The main aim of this work is to develop the fundamental structures 
and results that will support a non-autonomous theory for an 
ensemble-based approach to coherent structures.

We study non-autonomous one-dimensional dynamical systems that are given by
compositions of expanding interval maps, and their action on ensembles
represented by probability densities.  The time evolution of a density is
given by the Perron--Frobenius operator. For a single piecewise-expanding
map these densities evolve toward an equilibrium distribution which is
absolutely continuous (see \cite{LY73}); when the map is transitive, this
equilibrium distribution is also unique. Thus the equilibrium
distribution is an eigenfunction of the Perron--Frobenius operator with
eigenvalue $1$.  The exponential rate of convergence to equilibrium is
governed by the spectrum of the Perron--Frobenius operator. When
restricted to the space of functions of bounded variation (BV), the
Perron--Frobenius operator is quasicompact (see \cite{HK82}), meaning
that there are only finitely many spectral points of modulus greater than
the essential spectral radius, and each is an isolated eigenvalue of
finite multiplicity. We will say an eigenvalue is \emph{exceptional} if
it is different from $1$ and has modulus greater than the essential
spectral radius. It is known that in the BV setting the essential
spectral radius is determined by the long-term rate of separation of
nearby trajectories. Eigenfunctions corresponding to exceptional
eigenvalues relax more slowly to equilibrium than suggested by the local
separation of trajectories. The existence of such eigenfunctions has been
attributed to the presence of ``almost-invariant sets'' (see
\cite{DJ99,DFS00,F07}).

Exceptional eigenvalues have previously been found by considering
piecewise-affine expanding maps with a Markov partition (\cite{B96},
\cite{DFS00}, \cite{KR04}). When restricted to
the space of step-functions constant on the Markov partition
intervals, the associated Perron--Frobenius operator becomes a finite
dimensional operator.  In the present work we extend these results to
the non-autonomous setting. Instead of iterating a single map, we
consider a cocycle of maps and its associated Perron--Frobenius
cocycle. The appropriate way to describe exponential rate of
convergence to equilibrium is via the \emph{Lyapunov spectrum} of the
Perron--Frobenius cocycle.  As the Perron--Frobenius operator is a 
Markov operator, the Lyapunov spectrum is contained in the interval 
$[-\infty,0]$. We look for \emph{exceptional} Lyapunov
exponents, namely those greater than the essential upper bound of the
Lyapunov spectrum but less than zero.

We obtain a Lyapunov spectral decomposition for the Perron--Frobenius
cocycle into invariant subspaces with given Lyapunov exponents (see
Corollary \ref{cor:decomp}). This relies on a new version of the
Multiplicative Ergodic Theorem (see Theorem \ref{thm:main}), which
provides an invariant \underline{splitting} into Oseledets spaces
\emph{even when the generators are non-invertible}. Our new version
strengthens the standard Multiplicative Ergodic Theorem (see, for 
example \cite[Theorem 3.4.1]{A98}) where only an invariant 
\underline{flag} of nested subspaces is supplied. 

We demonstrate the existence of slow-mixing coherent
structures by constructing periodic (see Theorem \ref{thm:pereg}) 
and non-periodic (see Theorem \ref{thm:nonpereg}) examples of Lebesgue 
measure-preserving one-sided cocycles with exceptional Lyapunov 
exponents.  In each case, we calculate algebraically the Oseledets 
subspaces associated with the largest exceptional exponent and verify 
that the second largest Oseledets space captures the coherent structures. 

Finally, we present an algorithm for approximating the Oseledets 
splitting, which is based on a new computational approach suggested by the 
proof of Theorem 3. We demonstrate the effectiveness of the algorithm, 
by approximating some Oseledets subspaces numerically.

\section{Preliminaries}

We study the Perron--Frobenius operator of compositions of expanding
maps. We first introduce the necessary notation and relevant results
for autonomous systems, and then extend this to the non-autonomous
case.

\subsection{Autonomous systems}

We say that $T:I\to I$, where $I=[0,1]$ or $S^1$, is an
\emph{expanding map} if there exists a finite partition
$a_0=0<a_1<\ldots<a_M=1$ such that, for each $i=1,\ldots,M$, $T$ is
continuous on $(a_{i-1},a_i)$ and extends to a $C^2$ map on
$[a_{i-1},a_i]$ satisfying $|DT|_{(a_{i-1},a_i)}|>1$.

The \emph{Perron--Frobenius} operator for an expanding map $T:I\to I$
is defined, for an $L^1$ function $f:I\to\R$, by
\begin{align}
\P f(x) = \sum_{y\in T^{-1}(x)} \frac{f(y)}{|D T(y)|}.
\end{align}

In \cite{LY73}, the Perron--Frobenius operator is used to prove that
expanding maps have an absolutely continuous invariant probability
measure. The key step of their proof is to show that the
Perron--Frobenius operator contracts the norm on a suitable space of
functions: the functions of bounded variation.

The \emph{variation} of a function $f:I\to\mathbb{R}$ on a subinterval
$A\subset I$ is defined by
\begin{align*}
\var_A f:= \var_{x\in A} f(x)= \sup_{k\in\mathbb{N}}
\left\{\sum_{i=0}^k|f(x_i)-f(x_{i-1})|:x_i\in A,\ x_0<\cdots<x_k\right\}.
\end{align*}
Given $f\in L^\infty\subset L^1$, the variation is defined by
$\var_I f = \inf\{\var_I g: f=g \pp \}$. We denote by $\BV$ the Banach space
$$
\BV=\left\{f\in L^\infty: \var_I f<\infty\right\},
$$
equipped with the norm $\|f\|=\max\{\|f\|_{L^1},\var_I f \}$.  
We denote Lebesgue measure on $I$ by $m$, and $f\in\BV$ is called a
\emph{(probability) density} if $f\geq 0$ on $I$ (and $\|f\|_{L^1}=1$).
We write $\BV_+=\{f\in\BV:f\geq 0\}$.

The Perron--Frobenius operator is \emph{Markov}: that is, if $f\in L^1$
is a density, then $\P f$ is also a density and $\|\P
f\|_{L^1}=\|f\|_{L^1}$. A probability density $f^*$ satisfying $\P f^*=
f^*$ is an invariant probability density for $T$.

Keller \cite{K84} shows that the Perron--Frobenius operator of an
expanding map has at most finitely many exceptional eigenvalues.

\begin{keller}\label{thm:Keller}
  Given an expanding interval map $T:I\to I$, its Perron--Frobenius
  operator $\P$ acting on $\BV$ has essential spectral radius
$$
\theta:=\lim_{n\to\infty} \sup_{x\in I} 
\left( \frac{1}{|D(T^n)(x)|} \right)^{1/n},
$$
and all other spectral points are isolated eigenvalues of finite
multiplicity.
\end{keller}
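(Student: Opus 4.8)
The plan is to establish quasicompactness of $\P$ on $\BV$ by the classical Lasota--Yorke argument, and then to pin down the essential spectral radius as exactly $\theta$.

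The heart of the matter is a uniform \emph{Lasota--Yorke inequality}: for every $\kappa>\theta$ there should be constants $A,B>0$ with
\[
\var_I(\P^n f)\le A\,\kappa^n\,\var_I f+B\,\|f\|_{L^1},\qquad f\in\BV,\ n\ge 1 .
\]
I would first prove the one-step estimate. Writing $T_i=T|_{(a_{i-1},a_i)}$ for the monotone branches, $\P f=\sum_{i=1}^{M}\bigl((f/|DT|)\circ T_i^{-1}\bigr)\,\mathbf{1}_{T((a_{i-1},a_i))}$, and one bounds $\var_I(\P f)$ by splitting it into (i) the variation of each summand on the interior of its image interval, bounded by $\esssup_I(1/|DT|)\,\var_I f$ plus a distortion term governed by $\var_I(1/|DT|)$ (finite because $T$ is $C^2$ on each closed piece), and (ii) the jump discontinuities of $\P f$ at the images $T(a_i^{\pm})$ of the partition endpoints, each bounded by $\esssup_A|f|$ for an adjacent partition interval $A$ and hence, via the elementary bound $\esssup_A|f|\le \var_A f+\tfrac1{m(A)}\int_A|f|\,dm$, by a constant multiple of $\var_I f+\|f\|_{L^1}$. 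This yields $\var_I(\P f)\le\alpha\,\var_I f+C\,\|f\|_{L^1}$ with $\alpha=\esssup_I(1/|DT|)$, which in general is not $<1$. The remedy is to run the same argument for the iterate $T^{n}$, whose Perron--Frobenius operator is $\P^{n}$: by the chain rule the quantities $\esssup_I 1/|D(T^{n})|$ are submultiplicative, so $\bigl(\esssup_I 1/|D(T^{n})|\bigr)^{1/n}\to\theta$, and for $n_0$ large enough $\esssup_I 1/|D(T^{n_0})|<\kappa^{n_0}<1$, while the distortion and partition-counting constants for $T^{n_0}$ stay finite (bounded distortion, again from the $C^2$ hypothesis). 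Fixing such an $n_0$ gives $\var_I(\P^{n_0}f)\le r\,\var_I f+C_0\|f\|_{L^1}$ with $r<1$, and iterating over blocks of length $n_0$ while absorbing the finitely many intermediate powers $\P^{j}$ (each bounded on $\BV$) produces the displayed inequality. I expect this step — specifically extracting the \emph{sharp} rate $\theta$, which forces the passage to high iterates and the uniform bounded-distortion bookkeeping — to be the main obstacle.

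Given the Lasota--Yorke inequality, the rest is standard functional analysis. Helly's selection theorem shows the unit ball of $(\BV,\|\cdot\|)$ is relatively compact in $L^1(I,m)$, i.e.\ the inclusion $\BV\hookrightarrow L^1$ is compact; and $\P$ is an $L^1$-contraction because it is Markov. These inputs — the Lasota--Yorke estimate, the compact embedding and the $L^1$-boundedness — are precisely the hypotheses of Hennion's quasicompactness theorem (equivalently, one feeds the estimate into Nussbaum's formula $r_{\mathrm{ess}}(\P)=\lim_n\|\P^{n}\|_{\mathrm{ess}}^{1/n}$), yielding $r_{\mathrm{ess}}(\P)\le\kappa$ for every $\kappa>\theta$ and hence $r_{\mathrm{ess}}(\P)\le\theta$. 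Since $\P$ is Markov its spectral radius on $\BV$ is $1>\theta$, so $\P$ is genuinely quasicompact, and the Riesz theory of quasicompact operators gives that every spectral point of modulus $>r_{\mathrm{ess}}(\P)$ is an isolated eigenvalue of finite algebraic multiplicity — the second assertion of the theorem.

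It remains to prove $r_{\mathrm{ess}}(\P)\ge\theta$. I would do this by exhibiting, for each $\varepsilon>0$, genuinely infinite-dimensional behaviour of $\P^n$ at scale $(\theta-\varepsilon)^n$: choose for each $n$ a branch of $T^{n}$ on which $|D(T^{n})|$ is near its supremum, and a family of tent functions supported on small, pairwise disjoint subintervals inside that branch; one checks that $\P^{n}$ rescales the $\BV$-norm of each such function by a factor comparable to $\esssup_I 1/|D(T^{n})|$ while keeping the images (essentially) disjointly supported, so that no finite-rank operator can approximate $\P^{n}$ on $\BV$ to within $(\theta-\varepsilon)^n$. Nussbaum's formula then gives $r_{\mathrm{ess}}(\P)\ge\theta-\varepsilon$; letting $\varepsilon\downarrow0$ and combining with the upper bound completes the identification $r_{\mathrm{ess}}(\P)=\theta$.
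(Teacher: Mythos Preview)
The paper does not prove this statement at all: it is quoted as a background result of Keller \cite{K84} in the preliminaries section, with no argument supplied. So there is no ``paper's own proof'' to compare against.

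That said, your sketch is the standard route and is essentially correct. The Lasota--Yorke inequality iterated to high powers of $T$, combined with the compact embedding $\BV\hookrightarrow L^1$ and Hennion's (or Ionescu-Tulcea--Marinescu's) quasicompactness criterion, gives $r_{\mathrm{ess}}(\P)\le\theta$ and the isolated-eigenvalue structure outside the essential spectrum. The lower bound $r_{\mathrm{ess}}(\P)\ge\theta$ via disjointly supported $\BV$ functions on a branch of near-minimal expansion is also the right idea; one should be slightly careful that the branch domain does not shrink too fast (so that the $L^1$ part of the $\BV$ norm does not dominate the variation part and spoil the comparison), but bounded distortion on $C^2$ branches handles this. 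Keller's original paper \cite{K84} proceeds along broadly these lines, though the precise packaging of the essential spectral radius computation differs.
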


Exceptional eigenvalues have a distinguished dynamical significance as
their eigenfunctions are associated with relaxation to equilibrium at
exponential rates slower than the rate suggested by the average local
separation of trajectories $\theta$. For example, if $\P g=\lambda g$
with $\theta<|\lambda|<1$ then an initial density $f^*+\alpha g$,
$\alpha\neq 0$ will relax to $f^*$ at a rate slower than $\theta$.

Dellnitz and Junge \cite{DJ99} suggested that positive real
Perron--Frobenius eigenvalues near to 1 correspond to
\emph{almost-invariant sets}; more precisely, they suggested the
sets $A^+:=\{g>0\}$ and $A^-:=\{g\le 0\}$ formed an almost-invariant
partition of the state space.  Dellnitz et al.~\cite{DFS00} showed the
converse, presenting a class of interval maps with almost-invariant
sets and proving the existence of exceptional eigenvalues. 
Froyland \cite{F07} constructed a two-dimensional hyperbolic map 
with almost-invariant sets and proved the existence of an exceptional 
eigenvalue. Numerical methods have been developed (\cite{DJ99,F05,F08}) 
for the computation of exceptional eigenfunctions and almost-invariant 
sets;  these have been applied successfully in molecular dynamics 
(\cite{SHD99}), astrodynamics (\cite{D+05}), and ocean 
circulation (\cite{F+07}).

Our intent in the present work is to generalise the notion of
almost-invariant sets in autonomous systems to that of coherent
structures in non-autonomous systems.  The latter will represent
structures that are perhaps quite mobile, but disperse at rates
slower than suggested by local trajectory separation.

\subsection{Non-autonomous systems}

We will examine exceptional spectral points in the non-autonomous
case, and study compositions of expanding maps taken from a finite
collection, and composed in order according to given sequences.

Let $\sigma$ be an ergodic automorphism of a probability space
$(\Omega,\mathcal{H},p)$ that preserves the probability $p$.  Given a
measurable/topological/vector space $X$, a \emph{(one-sided) cocycle}
over $\sigma$ is a function $H:\Z^+\times \Omega\times X\to X$ with
the properties that for all $x\in X$ and $\omega\in\Omega$:
\begin{itemize}
\item $H(0,\omega,x)=x$;
\item for all $m,n\in\Z^+$, $H(m+n,\omega,x)=H(m,\sigma^n\omega,H(n,\omega,x))$.
\end{itemize}
We sometimes write $H^{(n)}(\omega)(x)$ for $H(n,\omega,x)$, and
$H(\omega)(x)$ for $H(1,\omega,x)$.  The \emph{generator} of a cocycle
$H$ is the mapping $\tilde{H}:\Omega\to \mathrm{End}(X)$ given by
$\tilde{H}(\omega)=H(\omega)$.  Since the cocycle is uniquely
determined by $\tilde{H}$, we occasionally refer to $\tilde{H}$ itself
as the cocycle when no confusion can occur.

In the sequel, $\sigma$ will frequently be a (left) shift, defined by 
$(\sigma\omega)_i=\omega_{i+1}$, acting on a two-sided
sequence space $(\Omega,\mathcal{F},p)$ on $K$ symbols, where $\Omega\subset
(Z_K)^\Z$, $Z_K=\{1,\ldots,K\}$, is invariant under $\sigma$. The 
shift $\sigma$ preserves the probability $p$ and is ergodic with respect
to $p$.

\begin{definition}
  Let $\{T_i\}_{i\in Z_K}$, be a collection of expanding maps of $I$,
  and let $\P_i:\BV\to\BV$ be the Perron--Frobenius operator
  associated to $T_i:I\to I$.  The \emph{map cocycle generated by
    $\{T_i\}_{i\in Z_K}$}, denoted by $\Phi:\Z^+\times \Omega\times
  I\to I$, is defined to be the one-sided cocycle with generator
  $\tilde{\Phi}(\omega)=T_{\omega_0}\in \{T_i\}_{i\in Z_K}$.
  Associated to $\Phi$ is the \emph{Perron--Frobenius cocycle}
  $\P:\Z^+\times \Omega\times \BV\to\BV$, which is defined to be the
  one-sided cocycle with generator
  $\tilde{\P}(\omega)=\P_{\omega_0}\in\{\P_i\}_{i\in Z_K}$.
\end{definition}

Notice that even though we use a two-sided shift space, we only form
one-sided cocycles not two-sided cocycles. This is because the expanding
maps are non-invertible, as are their Perron--Frobenius operators.

We say a cocycle is \emph{periodic} if the underlying shift space
$\Omega$ is generated by a single element: that is, there exists
$R\in\mathbb{N}$, called the \emph{period}, and $\omega\in\Omega$ such
that $\Omega=\{\omega,\sigma\omega,\ldots\sigma^{R-1}\omega\}$; we say a
cocycle is \emph{autonomous} if $\Omega$ contains a single element.

\section{Quasicompactness of the transfer cocycle}

Information about the exponential decay rates of the Perron--Frobenius
cocycle is given by its Lyapunov spectrum.

\begin{definition}
  We denote by $\lambda(\omega,f)$ the \emph{Lyapunov exponent} of
  $f\in\BV$, defined
$$
\lambda(\omega,f)=\limsup_{n\to\infty}\frac{1}{n}\log \|\P^{(n)}(\omega)
f\|.
$$
We define the \emph{Lyapunov spectrum} $\Lambda(\P(\omega))\subset \R$
of the Perron--Frobenius cocycle at $\omega$ to be the set
$$
\Lambda(\P(\omega)):= \{\lambda(\omega,f):f\in\BV\}.
$$
\end{definition}

The exponential rate of decay that can be expected purely from the
local expansion is the \emph{essential upper bound}
$$
\vartheta(\omega):=-\inf_{x\in I} \limsup_{n\to\infty}\frac{1}{n}\log
|D\Phi^{(n)}(\omega)(x)|<0,
$$
which we denote by $\vartheta$ if independent of $\omega$.  Points in
$\Lambda(\P(\omega))$ that are greater than $\vartheta(\omega)$
indicate the presence of large-scale structures that reduce the rate
of mixing of the system, except for the maximal Lyapunov exponent, $0$, 
which is associated with an invariant density.
We refer to Lyapunov spectral points in the
interval $(\vartheta(\omega),0)$ as \emph{exceptional}.

In order to find systems with exceptional Lyapunov spectrum, we
restrict our attention to map cocycles generated by piecewise-affine
maps with a Markov partition.
We say an expanding map $T:I\to I$ is \emph{piecewise-affine} if there
is a partition $a_0=0<a_1<\ldots<a_m=1$ such that $T$ has constant
derivative on each interval $(a_{i-1},a_i)$.  Recall that, for a map
$T:I\to I$, a partition $\B=\{B_k\}_{k=1}^M$ of $I$ into intervals is
called a \emph{Markov partition} if for each pair $1\leq i,j \leq M$
such that $B_i\cap T(B_j)\neq \emptyset$, we have $B_i\subset T(B_j)$.
Associated to $T$ is a \emph{transition matrix}
$\Gamma=(\gamma_{i,j})_{1\leq i,j\leq M}$, where $\gamma_{i,j}=1$ if
$T(B_j)\supset B_i$ and $0$ otherwise. Given a partition, we denote
the set of partitioning points by $S_\B=I\backslash \bigcup_{B\in\B}
B$.

For a Markov partition $\B$, we let $\chi(\B)$ denote the space of
step-functions $I\to \mathbb{R}$ that are constant on the intervals of
$\B$.  We say $\B$ is a \emph{common Markov partition} for a
collection of maps $\{T_i\}_{i\in Z_K}$ if it is a Markov partition
for each map $T_i$. If the generators of the map cocycle have a common
Markov partition $\B$, then $\chi(\B)\subset \BV$ is an invariant
subspace for the associated Perron--Frobenius cocycle.

We now fix a collection $\T:=\{T_i\}_{i\in Z_K}$ of piecewise-affine
expanding maps with a common Markov partition $\B$. Clearly $\chi(\B)$
is an invariant subspace for the Perron--Frobenius cocycle.
Let $F$ be the quotient space $\BV/\chi(\B)$ with norm 
\mbox{$\|f\|_F:=\| f-Qf\|$}, where $Q:\BV\to\chi(\B)$ is the projection
$$
Q f(x)=\sum_{B\in\B} \alpha_B(f)\chi_B(x),\quad \alpha_B(f)=
\frac{1}{|B|}\int_B f(s)\ \mathrm{d}s.
$$
We identify an element $f+\chi(\B)\in F$ with the function $f-Qf$.
Thus $f\in F$ is characterised as having zero mean on each
interval $B\in\B$: that is, for each $B\in\B$, $\int_B f(s)\
\mathrm{d}s=0$.

The following Lemma bounds the growth rate for functions in $F$; we
will shortly see that $\vartheta(\omega)$ is the essential upper bound
for the Lyapunov spectrum of $\P(\omega)$.

\begin{lemma}\label{lem:Fbound}
Let $\Phi$ be a map cocycle generated by piecewise-affine expanding maps,
with a common Markov partition $\B$. Then 
\mbox{$\|\P^{(n)}(\omega)|_F\|\leq \sup_I \frac{3}{|D\Phi^{(n)}(\omega)|}$} 
for every $n\in\mathbb{N}$, and thus
$$
\sup_{f\in F} \lambda(\omega,f)\leq \vartheta(\omega).
$$
\end{lemma}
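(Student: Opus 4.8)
The plan is to estimate the variation of $\P f$ for $f \in F$ in terms of the variation of $f$ and the expansion, then iterate using the cocycle property. First I would reduce to a single generator: since the cocycle is a composition $\P^{(n)}(\omega) = \P_{\omega_{n-1}} \circ \cdots \circ \P_{\omega_0}$ and $F$ is invariant (each $\P_i$ preserves the zero-mean-on-each-$B$ property, because $Q\P_i Q = \P_i Q$ on $\chi(\B)$ and the $\P_i$ are Markov), it suffices to understand how one step acts. The key local fact is that on each Markov interval $B \in \B$, a piecewise-affine map $T$ with Markov partition $\B$ maps $B$ affinely and surjectively onto a union of intervals of $\B$; so for $x$ in the interior of some $B' \in \B$, the preimages $y \in T^{-1}(x)$ lie one in each $B$ with $\gamma_{B',B} = 1$, and $|DT(y)|$ is constant on each such $B$. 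This means $\P f$ restricted to $B'$ is a sum of rescaled, possibly orientation-reversed copies of $f|_B$ over the relevant $B$'s.

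Next I would bound $\var_I(\P f)$. On the interior of each $B' \in \B$, $\var_{B'}(\P f) \le \sum_{B: \gamma_{B',B}=1} \frac{1}{|DT|_B|}\var_B(f) \le \frac{1}{\inf_I |DT|}\sum_B \var_B(f) \le \frac{1}{\inf_I|DT|}\var_I(f)$, using that the $B$'s are disjoint so their variations sum to at most the total variation. Summing over the (disjointly-supported) intervals $B'$ and adding the jumps at the partition points $S_\B$ of $\B$: because each $f \in F$ has zero mean on every $B$, on each $B$ the function $f$ attains both nonpositive and nonnegative values, so $\sup_B |f| \le \var_B f$; hence the jump of $\P f$ at each partition point is controlled by $\frac{2}{\inf_I |DT|}\var_I f$ (two one-sided limits, each a sum of terms bounded by $\sup_B|f|/|DT|$ over the incident $B$'s — and I would check this sum telescopes against the total variation rather than merely being bounded by $2M/\inf|DT|$). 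Combining the interior variation and the jump contributions gives $\var_I(\P f) \le \frac{3}{\inf_I |DT|}\var_I f$; the constant $3$ is exactly where the careful bookkeeping (one unit for interior variation, two for the boundary jumps, all measured against $\var_I f$ via the zero-mean property) is needed. Since also $\|\P f\|_{L^1} \le \|f\|_{L^1}$ and in fact on $F$ the $L^1$ norm is dominated by the variation, the $\BV$-norm estimate $\|\P f\| \le \frac{3}{\inf_I|DT|}\|f\|$ follows, and composing over $n$ steps replaces $\inf_I |DT|$ by $\inf_I |D\Phi^{(n)}(\omega)|$.

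Finally, the Lyapunov bound: from $\|\P^{(n)}(\omega)|_F\| \le \sup_I \frac{3}{|D\Phi^{(n)}(\omega)|}$ we get $\frac{1}{n}\log\|\P^{(n)}(\omega)f\| \le \frac{1}{n}\log 3 + \frac{1}{n}\sup_I\left(-\log|D\Phi^{(n)}(\omega)|\right)$ for $f \in F$; taking $\limsup_{n\to\infty}$ and using the definition of $\vartheta(\omega) = -\inf_{x\in I}\limsup_n \frac{1}{n}\log|D\Phi^{(n)}(\omega)(x)|$ yields $\lambda(\omega,f) \le \vartheta(\omega)$ (the $\sup$ over $x$ of $-\frac1n\log|D\Phi^{(n)}|$ has $\limsup$ equal to $-\inf_x\limsup_n\frac1n\log|D\Phi^{(n)}|$ when the cocycle is generated by finitely many piecewise-affine maps, since then the derivative along orbits is a product of finitely many constants and the relevant extrema are attained). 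The main obstacle I anticipate is pinning down the constant $3$ — specifically, showing the boundary-jump terms of $\P f$ sum (over all partition points and all incident branches) to at most $\frac{2}{\inf|DT|}\var_I f$ rather than something that grows with the number of partition elements; this is where the zero-mean characterization of $F$ and a telescoping/disjointness argument across adjacent intervals must be used rather than naive triangle-inequality bounds.
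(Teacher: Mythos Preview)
There is a genuine gap in your reduction to a single generator. If you prove
$\|\P_{\omega_0}|_F\| \le 3/\inf_I|DT_{\omega_0}|$ and then compose, you obtain
\[
\|\P^{(n)}(\omega)|_F\| \le \prod_{i=0}^{n-1}\frac{3}{\inf_I|DT_{\omega_i}|}
= \frac{3^n}{\prod_{i=0}^{n-1}\inf_I|DT_{\omega_i}|},
\]
not $3/\inf_I|D\Phi^{(n)}(\omega)|$. The factor $3^n$ does not disappear in the Lyapunov limit (it contributes $\log 3$), and moreover $\prod_i \inf_I|DT_{\omega_i}|$ is not the same as $\inf_I|D\Phi^{(n)}(\omega)|$; so neither conclusion of the lemma follows. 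Your phrase ``composing over $n$ steps replaces $\inf_I|DT|$ by $\inf_I|D\Phi^{(n)}(\omega)|$'' is precisely the unjustified step.

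The paper avoids this by working with $\P^{(n)}(\omega)$ \emph{directly} for each $n$, using the refined dynamical partition $\B^{(n)}(\omega)=\bigvee_{i=0}^{n-1}\Phi^{(i)}(\omega)^{-1}\B$. Crucially, the paper does \emph{not} use the zero-mean property to control boundary jumps. Instead it shows the purely variational inequality
\[
\var_I \P^{(n)}(\omega)f
\;\le\; \sum_{B\in\B^{(n)}(\omega)} \var_I\!\left(\frac{\chi_B\, f}{|D\Phi^{(n)}(\omega)|}\right)
\;\le\; 3\,\var_I\!\left(\frac{f}{|D\Phi^{(n)}(\omega)|}\right),
\]
valid for \emph{any} $f\in\BV$, by splitting each $\var_I(\chi_B\,g)$ into $\var_B g$ plus endpoint contributions; the endpoint terms sum to at most $2\sum_{x\in S_{\B^{(n)}}}\xi(g(x))$, and interior variations plus twice the jumps is bounded by $3\var_I g$. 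The membership $f\in F$ enters only at the final step, to identify $\|f\|=\var_I f$ (via the inequality $\|f-Qf\|_{L^1}\le \var_I(f-Qf)$). This completely sidesteps the obstacle you anticipated: there is no danger of the constant growing with $\#\B$ or $\#\B^{(n)}$, because everything is measured against the total variation of the single function $f/|D\Phi^{(n)}|$, not against per-interval suprema summed over the partition.
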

\begin{proof}
  The common Markov partition $\B$ allows for the following estimate
  of the variation of the Perron--Frobenius cocycle:
\begin{align*}\label{eqn:MarkovVarBound}
  \var_I \P^{(n)}(\omega) f &\leq \sum_{A\in\B} \var_I
  (\chi_A\cdot \P^{(n)}(\omega) f) \\
  &= \sum_{A\in\B} \var_{x\in I} \Bigg(\chi_A(x)\cdot
  \sum_{y\in [\Phi^{(n)}(\omega)]^{-1}(x)}
  \frac{f(y)}{|D\Phi^{(n)}(\omega)(y)|}\Bigg) \\
  &= \sum_{A\in\B} \var_{x\in I} \Bigg(
  \sum_{y\in [\Phi^{(n)}(\omega)]^{-1}(x)}
  \frac{ \chi_A(\Phi^{(n)}(\omega)(y))\cdot f(y)}
  {|D\Phi^{(n)}(\omega)(y)|}\Bigg) \\
  &= \sum_{A\in\B} \var_I \Bigg(\sum_{\substack{B\in \B^{(n)}(\omega)
  \\
  \Phi^{(n)}(\omega)B=A}} \frac{ \chi_A\circ \Phi^{(n)}(\omega)\cdot f}
  {|D\Phi^{(n)}(\omega)|}\Bigg) \\
  &\leq \sum_{A\in\B} \sum_{\substack{B\in \B^{(n)}(\omega)\\
   \Phi^{(n)}(\omega)B=A}} \var_I \Bigg(\frac{ \chi_B\cdot f}
   {|D\Phi^{(n)}(\omega)|}\Bigg) \\
\end{align*}
where $\B^{(n)}(\omega)=\bigvee_{i=0}^{n-1}\Phi^{(i)}(\omega)^{-1}\B$.
Thus
\begin{align}
  \var_I \P^{(n)}(\omega) f \leq \sum_{B\in\B^{(n)}(\omega)}
  \var_I \Bigg(\frac{\chi_B . f}{|D\Phi^{(n)}(\omega)|}\Bigg).
\end{align}
We show that $\var_I (f-Qf)\geq \|f-Qf\|_{L^1}$ for $f\in\textrm{BV}$,
from which it follows that \mbox{$\|f\|_F=\var_I(f-Qf)$}.
\begin{align}\label{eqn:L1VarIneq}
  \|f-Qf\|_{L^1} &=  \sum_{B\in\B} \int_B |(f-Qf)(s)|\ \mathrm{d}s \nonumber\\
  &\leq \sum_{B\in\B} |B| \left(\esssup_{x\in B} (f-Qf)(x) -
  \essinf_{x\in B} (f-Qf)(x)\right) \nonumber\\
  &\leq \sum_{B\in\B} \left(\esssup_{x\in B} (f-Qf)(x) -
  \essinf_{x\in B} (f-Qf)(x)\right) \nonumber\\
  &\leq \sum_{B\in\B} \var_B (f-Qf) \nonumber\\
  &\leq \var_I (f-Qf). %number
\end{align}

Let $\xi(f(x))=|\lim_{y\nearrow x} f(y)-\lim_{y\searrow x} f(y)|$
denote the \emph{jump} of $f$ at $x$.  Now, suppose $f\in F$. Applying
(\ref{eqn:MarkovVarBound}) we have
\begin{align}
  \|\P^{(n)}(\omega) f\| 
  &\leq \sum_{B\in\B^{(n)}(\omega)}
  \var_I \frac{\chi_B . f}{|D\Phi^{(n)}(\omega)|} \nonumber\\
  &\leq \sum_{B\in\B^{(n)}(\omega)} \var_B \frac{ f}{|D\Phi^{(n)}(\omega)|}
   + 2 \sum_{x\in S_{\B^{(n)}(\omega)}} \xi \left( \frac{f(x)}
   {|D\Phi^{(n)}(\omega)(x)|} \right) \nonumber\\
  &\leq 3\var_I \frac{f}{|D\Phi^{(n)}(\omega)|},\nonumber\\
  &\leq \sup_I \frac{3}{|D\Phi^{(n)}(\omega)|}. \var_I f,\nonumber\\
  &= \sup_I \frac{3}{|D\Phi^{(n)}(\omega)|}. \|f\|,
\end{align}
since $f\in F$ by (\ref{eqn:L1VarIneq}), giving the first part. Hence, 
for any $f\in F$,
$$
\lambda(\omega,f)\leq \limsup_{n\to\infty} \frac{1}{n}\log
\left(\frac{3}{|D\Phi^{(n)}(\omega)|}. \|f\|\right) = \vartheta(\omega)
$$
and the second result follows.
\end{proof}

We now prove that the exceptional Lyapunov spectrum of $\P$ is contained
in $\Lambda(\P(\omega)|_{\chi(\B)})$. For the autonomous case, see for
example \cite[Lemma 3.1]{BK98}.

\begin{proposition}\label{prop:QC}
Let $\Phi$ be a map cocycle generated by piecewise-affine expanding maps,
with a common Markov partition $\B$. Then the Lyapunov spectrum of the
Perron--Frobenius cocycle $\P(\omega)$ satisfies
$$
\Lambda(\P(\omega)) \subset \{x\leq \vartheta(\omega)\}\cup
\Lambda(\P(\omega)|_{\chi(\B)}),
$$
and thus $\P(\omega)$ has at most $\#\B$ exceptional Lyapunov
exponents.
\end{proposition}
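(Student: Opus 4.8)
The plan is to decompose an arbitrary $f \in \BV$ as $f = Qf + (f - Qf)$, where $Qf \in \chi(\B)$ and $f - Qf \in F$, and to control the Lyapunov exponent of each piece separately using the fact that $\P^{(n)}(\omega)$ respects the splitting $\BV = \chi(\B) \oplus F$ (in the sense that $\chi(\B)$ is invariant, so the action on $F \cong \BV/\chi(\B)$ is well-defined). First I would note that since $\|f\| \leq \|Qf\| + \|f - Qf\|$ and $\lambda(\omega, \cdot)$ is subadditive in the obvious sense (the limsup of a sum is bounded by the max of the limsups when both are finite, or more carefully $\limsup \tfrac1n \log(a_n + b_n) \leq \max(\limsup \tfrac1n \log a_n, \limsup \tfrac1n \log b_n)$), we get $\lambda(\omega, f) \leq \max\{\lambda(\omega, Qf), \lambda(\omega, f - Qf)\}$, provided we are careful about the quotient norm versus the $\BV$ norm on the two summands.

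The second step is to apply Lemma \ref{lem:Fbound} to the piece $f - Qf \in F$: it gives $\lambda(\omega, f - Qf) \leq \vartheta(\omega)$. For the piece $Qf \in \chi(\B)$, by definition $\lambda(\omega, Qf) \in \Lambda(\P(\omega)|_{\chi(\B)})$ (or $Qf = 0$, in which case there is nothing to bound). Hence $\lambda(\omega, f) \leq \vartheta(\omega)$ or $\lambda(\omega, f) \in \Lambda(\P(\omega)|_{\chi(\B)})$, which is exactly the claimed inclusion $\Lambda(\P(\omega)) \subset \{x \leq \vartheta(\omega)\} \cup \Lambda(\P(\omega)|_{\chi(\B)})$. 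The final clause — that there are at most $\#\B$ exceptional exponents — then follows immediately: $\Lambda(\P(\omega)|_{\chi(\B)})$ is the Lyapunov spectrum of a cocycle of $\#\B \times \#\B$ matrices, so it has at most $\#\B$ distinct values, and the exceptional exponents (those in $(\vartheta(\omega), 0)$) form a subset of these.

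The main technical obstacle is the bookkeeping around the quotient norm: $\|f - Qf\|_F$ as measured in $F$ equals $\|f - Qf\|$ in $\BV$ (this is established inside the proof of Lemma \ref{lem:Fbound}), but one must verify that $\P^{(n)}(\omega)$ acting on the representative $f - Qf$ and then re-projecting is consistent with the cocycle on $F$ — i.e.\ that $\|\P^{(n)}(\omega)(f - Qf)\|_F \leq \|\P^{(n)}(\omega)|_F\| \cdot \|f - Qf\|$, which requires knowing $\P^{(n)}(\omega)$ maps $\chi(\B)$ into itself so that the induced quotient map is well-defined; this is guaranteed by the common Markov partition hypothesis. A secondary point to handle cleanly is the degenerate case $Qf = 0$ or $f - Qf = 0$, and the possibility $\lambda(\omega, f) = -\infty$, but these are routine. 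Everything else is the elementary observation about limsups of sums of exponentially-bounded sequences.
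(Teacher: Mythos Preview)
Your approach is essentially identical to the paper's: decompose $f=Qf+(f-Qf)$, invoke Lemma~\ref{lem:Fbound} on the $F$-piece, and read off the $\chi(\B)$-piece as belonging to the finite-dimensional spectrum. There is, however, one logical step you glide over. From the inequality
\[
\lambda(\omega,f)\le\max\{\lambda(\omega,Qf),\lambda(\omega,f-Qf)\}
\]
you conclude ``$\lambda(\omega,f)\le\vartheta(\omega)$ or $\lambda(\omega,f)\in\Lambda(\P(\omega)|_{\chi(\B)})$'', but the second alternative does not follow from a mere upper bound: knowing $\lambda(\omega,f)\le\lambda(\omega,Qf)$ with $\lambda(\omega,Qf)\in\Lambda(\P(\omega)|_{\chi(\B)})$ does not place $\lambda(\omega,f)$ in that finite set. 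What is needed (and what the paper states explicitly) is that when $\lambda(\omega,Qf)>\lambda(\omega,f-Qf)$ one actually has \emph{equality} $\lambda(\omega,f)=\lambda(\omega,Qf)$. This is the companion to your ``elementary observation about limsups'': if $\limsup\frac1n\log a_n>\limsup\frac1n\log b_n$ then $\limsup\frac1n\log(a_n+b_n)=\limsup\frac1n\log a_n$, via the reverse triangle inequality along a subsequence realising the larger limsup. Once you insert that equality, your argument and the paper's coincide.
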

\begin{proof}
Each $f\in\BV$ has a unique decomposition as a sum $f=f_{\chi(\B)}+f_F$,
where $f_{\chi(\B)}:=Q(f)\in\chi(\B)$ and $f_F:=f-Q(f)\in
F=\BV/\chi(\B)$. Notice that \mbox{$\lambda(\omega,f)\leq
\max\{\lambda(\omega,f_{\chi(\B)}),\lambda(\omega,f_F)\}$}. Thus, if
\mbox{$\lambda(\omega,f_{\chi(\B)})\leq \lambda(\omega,f_F)$}, then
we have
\mbox{$\lambda(\omega,f)\leq \vartheta(\omega)$} by Lemma \ref{lem:Fbound}.
Otherwise, we have \mbox{$\lambda(\omega,f_{\chi(\B)}) > \lambda(\omega,f_F)$},
in which case \mbox{$\lambda(\omega,f)= \lambda(\omega,f_{\chi(\B)})$}.
\end{proof}

\section{A stronger Multiplicative Ergodic Theorem for non-invertible matrices}

By Proposition \ref{prop:QC}, for each $\omega\in\Omega$, all
exceptional Lyapunov exponents of $\P(\omega)$ are contained in the
Lyapunov spectrum $\Lambda(\P(\omega)|_{\chi(\B)})$. We now represent
$\P(\omega)|_{\chi(\B)}$ as a matrix cocycle.

The set $\{\chi(B_i)\}_{i=1}^M$ forms a basis for $\chi(\B)$, and thus
each $f\in\chi(\B)$ may be written as $f=\sum_{i=1}^M v_i\chi_{B_i}$
in a unique way. Similarly, given $v\in\R^M$, we write $\left\langle
  v\right\rangle:=\sum_{i=1}^M v_i\chi_{B_i}$ for the corresponding
function in $\BV$.

For $T\in\T$, the matrix $P=(p_{i,j})_{1\leq i,j\leq M}$, where
$$
p_{i,j}=\frac{\gamma_{j,i}}{|DT|_{B_j}|}=\frac{m( T^{-1}(B_i)\cap
  B_j)}{m(B_j)},\quad 1\leq i,j\leq M,
$$
represents the Perron--Frobenius operator for $T$ with respect to the
basis $\{\chi(B_i)\}_{i=1}^M$ of $\chi(\B)$ (see, for example,
\cite[p.176]{BG97}).  That is, for each $v\in\R^M$ we have
$$
\P\left\langle v\right\rangle =\left\langle Pv\right\rangle.
$$

Let $P_i$ denote the matrix representing the restricted Perron--Frobenius
operator $\P_i|_{\chi(\B)}$ with respect to the basis
$\{\chi(B_i)\}_{i=1}^M$. The \emph{matrix cocycle} $A:\Z^+\times
\Omega\times \R^M\to \R^M$ is the one-sided cocycle with generator
$\tilde{A}(\omega)=P_{\omega_0}$.

Thus for each $\omega\in\Omega$, all exceptional Lyapunov exponents of
$\P(\omega)$ are captured by the Lyapunov spectrum of the cocycle
$\Lambda(A(\omega))=\Lambda(\P(\omega)|_{\chi(\B)})$.

The Multiplicative Ergodic Theorem for one-sided cocycles (see, for
example, \cite[Theorem 3.4.1]{A98}) provides us with a description of the
asymptotic behaviour of the matrix cocycle $A(\omega)$. It reveals
that the Lyapunov spectra $0=\lambda_1>\lambda_2>\cdots
\lambda_\ell\geq -\infty$ of $A(\omega)$ coincide for all $\omega$ in
a $\sigma$-invariant $\tilde{\Omega}\subset \Omega$ of full
$p$-measure. Moreover, it states that for each
$\omega\in\tilde{\Omega}$, a Lyapunov exponent $\lambda(\omega,v)$ of
$v\in\R^M$ for $A(\omega)$ is determined by the position of $v$ within
a \emph{flag} of nested subspaces
\mbox{$\{0\}=\V{\ell}{\omega}\subset\cdots\subset \V{2}{\omega}\subset
\V{1}{\omega}=\chi(\B)$}. Specifically, for each $i=1,\ldots,\ell$,
\begin{align}\label{eqn:flag}
  \lambda(\omega,v)=\lambda_i \Longleftrightarrow v\in \V{i}{\omega}
  \backslash\V{i+1}{\omega}.
\end{align}
In addition, the flag of subspaces is preserved by the action of the
cocycle: for $i=1,\ldots,\ell$,
$$
A(\omega) \V{i}{\omega}\subset \V{i}{\sigma\omega}.
$$
For two-sided matrix cocycles (see, for
example, \cite[Theorem 3.4.11]{A98}), by intersecting the corresponding
subspaces of the flags for the cocycle and for its inverse, one
obtains an \emph{Oseledets splitting}: that is, for each
$\omega\in\Omega$ we have a decomposition $\R^M=\bigoplus_{i=1}^\ell
\W{i}{\omega}$ such that for $i=1,\ldots,\ell$,
$$
v\in \W{i}{\omega}\setminus\{0\} \Longrightarrow
\lambda(\omega,v)=\lambda_i
$$
and
$$
A(\omega) \W{i}{\omega} = \W{i}{\sigma\omega}.
$$

Our matrix cocycle $A(\omega)$ sits between these two extremes: the 
shift $\sigma$ is invertible, but the matrices $\{P_i\}_{i\in Z_K}$
generating $A(\omega)$ are not. 
Because of the non-invertibility of the cocycle, we cannot use the 
standard approach described above to define an Oseledets splitting. 
The following new result relies on a push-forward approach to prove 
the existence of an Oseledets splitting even when the generators are 
non-invertible.

\begin{theorem3}
Let $\sigma$ be an ergodic invertible measure-preserving transformation
of the space $(\Omega,\mathcal B,\mu)$. Let $A\colon \Omega\to M_d(\R)$
be a measurable family of matrices satisfying
$$\int \log^+\|A(\omega)\|\,\mathrm{d}\mu(\omega)<\infty.
$$
Then there exist $\lambda_1>\lambda_2>\cdots>\lambda_\ell\geq -\infty$
and %%here
dimensions $m_1,\ldots,m_\ell$, with $m_1+\cdots+m_\ell=d$, and a
measurable family of subspaces $\W{j}{\omega}\subseteq \R^d$ such that
for almost every $\omega\in\Omega$ the following hold:
\begin{enumerate}
\item $\dim \W{j}{\omega}=m_j$;
  \item $\R^d=\bigoplus_{j=1}^\ell \W{j}{\omega}$;
  \item $A(\omega)\W{j}{\omega}\subseteq \W{j}{\sigma\omega}$ (with
    equality if $\lambda_j>-\infty$);
  \item for all $v\in \W{j}{\omega}\setminus\{0\}$, one has
    $$
    \lim_{n\to\infty}
    (1/n)\log\|A(\sigma^{n-1}\omega)\ldots A(\omega)v\|=\lambda_j.
    $$
  \end{enumerate}
\end{theorem3}
\begin{proof}
See Section \ref{sec:MET}
\end{proof}

\begin{remark}
It follows from part (iv) of Theorem 3 that we can determine the 
Lyapunov exponent for any vector $v\in\R^M\setminus \{0\}$ by 
$$
\lambda(\omega,v)=\lambda_i \Longleftrightarrow v\in
\bigoplus_{k=i}^{\ell+1} \W{k}{\omega} \setminus
\bigoplus_{k=i+1}^{\ell+1} \W{k}{\omega},
$$
where we set $\W{\ell+1}{\omega}=\{0\}$ for all $\omega\in\Omega$.
\end{remark}

We now apply Theorem \ref{thm:main} to our matrix cocycle $A(\omega)$
induced by $P(\omega)|_{\chi(\B)}$.
Consider the part of the Lyapunov spectrum of $A(\omega)$ that is
greater than $\vartheta(\omega)$. Let $r\leq \ell$ satisfy
$\lambda_{r+1}\leq\vartheta(\omega)< \lambda_{r}$. Thus, the part of
$\Lambda(\P(\omega)|_{\chi(\B)})$ strictly greater than
$\vartheta(\omega)$ is precisely
$\lambda_1>\lambda_2>\cdots>\lambda_r$.  It follows from Proposition
\ref{prop:QC} that the exceptional Lyapunov spectrum of $\P(\omega)$
is precisely $\lambda_2,\ldots,\lambda_r$.  By defining
$\mathcal{W}_i(\omega)=\{\langle v\rangle :v\in \W{i}{\omega}\}$ for
$1\leq i\leq r$, we transfer the splitting of $\R^M$ obtained from
Theorem \ref{thm:main} into a splitting of $\chi(\B)$ to get the
following result:

\begin{corollary}\label{cor:decomp}
Let $(\Omega,\mathcal{F},p,\sigma)$ be a measurable sequence space, and
$\Phi:\Z^+\times\Omega\times I\to I$ be the cocycle generated by
piecewise-affine expanding maps, with a common Markov partition $\B$.
Then there exists a forward invariant full measure subset
$\tilde{\Omega}\subset \Omega$,
$0=\lambda_1>\cdots>\lambda_{r}>\vartheta(\omega)$, and $m_1,\ldots,
m_r$, satisfying $m_1+\cdots+m_r\leq \#\B$, such that for all
$\omega\in\tilde{\Omega}$,
\begin{enumerate}
\item there exist subspaces $\mathcal{W}_{i}(\omega)\subset \BV$,
  $i=0,\ldots,r$, $\dim(\mathcal{W}_{i}(\omega))=m_i$;
\item $\P(\omega) \mathcal{W}_{i}(\omega)=\mathcal{W}_{i}(\sigma\omega)$;
\item $\langle v\rangle\in \mathcal{W}_{i}(\omega)\setminus\{0\} 
  \Longrightarrow \lambda(\omega,v)=\lambda_i$.
\end{enumerate}
\end{corollary}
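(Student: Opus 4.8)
The plan is to deduce Corollary \ref{cor:decomp} directly from Theorem \ref{thm:main} (Theorem 3) applied to the matrix cocycle $A(\omega)$, together with Proposition \ref{prop:QC} and Lemma \ref{lem:Fbound}, transferring the resulting splitting of $\R^M$ through the identification $v\mapsto\langle v\rangle$. First I would record the hypotheses needed to invoke Theorem 3: the shift $\sigma$ on $(\Omega,\mathcal F,p)$ is an ergodic invertible measure-preserving transformation, and the generator $\omega\mapsto P_{\omega_0}$ takes only finitely many values $P_1,\ldots,P_K$ (one for each symbol), so $\log^+\|A(\omega)\|$ is bounded and in particular integrable. Hence Theorem 3 furnishes exponents $\lambda_1>\cdots>\lambda_\ell\geq-\infty$, dimensions $m_1,\ldots,m_\ell$ summing to $M=\#\B$, and a measurable family of subspaces $\W{j}{\omega}\subseteq\R^M$ defined on a full-measure set; by item (iii) this set can be taken forward invariant (and by ergodicity the exponents and dimensions are $\omega$-independent). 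Since $\P(\omega)\langle v\rangle=\langle P_{\omega_0}v\rangle$ and iterating gives $\P^{(n)}(\omega)\langle v\rangle=\langle A(\sigma^{n-1}\omega)\cdots A(\omega)v\rangle$, and since all norms on the finite-dimensional space $\chi(\B)$ are equivalent, the growth rate of $\|\P^{(n)}(\omega)\langle v\rangle\|$ in the $\BV$ norm equals that of $\|A(\sigma^{n-1}\omega)\cdots A(\omega)v\|$ in the Euclidean norm, so item (iv) of Theorem 3 translates into statement (iii) of the Corollary.

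Next I would identify the exceptional part of the spectrum. The maximal exponent is $0$ with a one-dimensional space spanned by an invariant density; more precisely $\lambda_1=0$. By Lemma \ref{lem:Fbound} the essential upper bound for the Lyapunov spectrum of $\P(\omega)$ on all of $\BV$ is $\vartheta(\omega)$, and by Proposition \ref{prop:QC} every exceptional exponent of $\P(\omega)$ lies in $\Lambda(\P(\omega)|_{\chi(\B)})=\Lambda(A(\omega))$. Choose $r\leq\ell$ with $\lambda_{r+1}\leq\vartheta(\omega)<\lambda_r$; then the part of $\Lambda(A(\omega))$ strictly above $\vartheta(\omega)$ is exactly $\lambda_1>\cdots>\lambda_r$. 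Setting $\mathcal W_i(\omega)=\{\langle v\rangle: v\in\W{i}{\omega}\}$ for $i=1,\ldots,r$ (and indexing to include $\mathcal W_0$ if one wishes to single out the invariant density), items (i) and (ii) of the Corollary follow immediately from items (i), (ii), (iii) of Theorem 3: the map $v\mapsto\langle v\rangle$ is a linear isomorphism of $\R^M$ onto $\chi(\B)\subset\BV$, so it preserves dimensions, and since it conjugates $A(\omega)$ to $\P(\omega)|_{\chi(\B)}$ the invariance $A(\omega)\W{i}{\omega}=\W{i}{\sigma\omega}$ (equality holding because $\lambda_i>\vartheta(\omega)>-\infty$ for $i\leq r$) becomes $\P(\omega)\mathcal W_i(\omega)=\mathcal W_i(\sigma\omega)$. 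The bound $m_1+\cdots+m_r\leq\#\B$ is clear since these are among the $\ell$ dimensions summing to $M=\#\B$.

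There is essentially no serious obstacle here: the Corollary is a packaging of Theorem 3 together with the two earlier results, and all the work is in (a) checking the hypotheses of Theorem 3 are met by our cocycle, (b) verifying that the finite-dimensional representation $\P(\omega)|_{\chi(\B)}\cong A(\omega)$ respects Lyapunov exponents (immediate from norm equivalence on $\chi(\B)$ and the cocycle property $\P^{(n)}(\omega)\langle v\rangle=\langle A(\sigma^{n-1}\omega)\cdots A(\omega)v\rangle$), and (c) locating where $\vartheta(\omega)$ cuts the spectrum. The one point requiring a word of care is the dependence on $\omega$: Theorem 3 gives a full-measure set, and one should note that it may be taken forward $\sigma$-invariant (replace it by $\bigcap_{n\geq 0}\sigma^{-n}$ of itself, still full measure since $\sigma$ preserves $p$), which is why the Corollary asserts a forward invariant $\tilde\Omega$ rather than a fully invariant one — consistent with the one-sided nature of the Perron--Frobenius cocycle. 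The exponents $\lambda_i$ and dimensions $m_i$ are genuinely $\omega$-independent by ergodicity of $\sigma$, and only the subspaces $\mathcal W_i(\omega)$ vary measurably with $\omega$. Finally I would remark that although the spectrum of $\P(\omega)$ on all of $\BV$ below $\vartheta(\omega)$ is not resolved by this argument, the Corollary captures precisely the exceptional exponents, which are the dynamically meaningful ones for coherent structures.
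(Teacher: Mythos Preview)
Your proposal is correct and follows essentially the same route as the paper: apply Theorem~3 to the matrix cocycle $A(\omega)$ (whose integrability is trivial since the generator takes finitely many values), cut the resulting Lyapunov spectrum at $\vartheta(\omega)$ using Proposition~\ref{prop:QC}, and transfer the Oseledets subspaces $\W{i}{\omega}\subset\R^M$ to $\mathcal W_i(\omega)\subset\chi(\B)\subset\BV$ via $v\mapsto\langle v\rangle$. Your added remarks on norm equivalence on $\chi(\B)$ and on obtaining a forward $\sigma$-invariant full-measure set by intersecting preimages are the right way to make the argument precise.
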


Note that for periodic cocycles, $\tilde{\Omega}=\Omega$.

\section{Construction of periodic cocycles with exceptional Lyapunov
  spectrum}

In this section we build a periodic map cocycle for which the
Perron--Frobenius cocycle has an exceptional Lyapunov spectrum.

In \cite{DFS00} individual maps are constructed for which the
Perron--Frobenius operator has exceptional eigenvalues. The
construction uses so-called `almost-invariant' sets. Given a map
$T:I\to I$ with an absolutely continuous invariant probability measure
$\mu$, a subset $U\subset I$ is almost-invariant if
$$
\frac{\mu(U\cap T^{-1}U)}{\mu(U)}\approx 1.
$$
For a map with an almost-invariant set $U$, the transfer of mass
between $U$ and $I\backslash U$ is low, and so we expect to find that
a mean-zero function positive on $U$ and negative on $I\backslash U$
decays to zero slowly. It is shown that for piecewise-affine Markov
maps, one often obtains an almost-invariant set from the support
of either the positive or negative part of the eigenfunction associated 
to the second largest eigenvalue of the Perron--Frobenius operator. 

For this first example, we construct a cocycle over a periodic shift 
space of period $3$ that has a cyclic coherent structure.
More precisely, we take three maps, each having a distinct interval 
from the partition $\J=\{[0,1/3],[1/3,2/3],[2/3,1]\}$ of $S^1$ as 
an almost-invariant set. Post-composing these maps with the rotation 
by $1/3$, we form three new maps which we apply in sequence repeatedly, 
thus forming a periodic map cocycle $\Phi$.
In this way, each generator $\tilde{\Phi}(\omega)$ of the map cocycle 
moves the majority of the mass of one distinguished interval 
$J(\omega)\in\J$ into another interval $J(\sigma\omega)\in\J$ with 
some small dissipation. Thus a map $J:\Omega\to \J$ specifies the 
location of our coherent structure.

\begin{theorem}\label{thm:pereg}
There exist three piecewise-affine expanding maps
\mbox{$T_1,T_2,T_3:S^1\to S^1$} with a common Markov partition 
$\B$ that generate a map cocycle $\Phi:\Z^+\times\Omega\times S^1\to S^1$ 
over the sequence space $\Omega\subset Z_3^\mathbb{Z}$ generated by 
$\alpha=\overline{123}$ with the following properties for $i=1,2,3$:
\begin{enumerate}
\item each map $T_i$ preserves Lebesgue measure;
\item $\vartheta=\log 1/3$;
\item each finite dimensional restriction $\P_i|_{\chi(\B)}$ of the 
  Perron--Frobenius operator of $T_i$ has no exceptional eigenvalues;
\item $\P(\omega)$ has an exceptional Lyapunov spectrum that is 
independent of $\omega$ and satisfies
\begin{align*}
    \Lambda(\P)\cap(\vartheta,0)\supset\left\{
      \log\left(\frac{\sqrt[3]{8\pm 2\sqrt{11}}}{3}\right) \right\}.
\end{align*}
  \item the Oseledets subspaces
    $\mathcal{W}_{2}(\omega)$ corresponding to
    the largest exceptional Lyapunov exponent exists for all
    $\omega\in\Omega$, and depends only on $\omega_0$.
\end{enumerate}
\end{theorem}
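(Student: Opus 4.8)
The plan is to construct the three maps explicitly and then verify the listed properties by direct computation with the finite-dimensional matrix cocycle provided by the reduction in Section 3. First I would choose a common Markov partition $\B$ refining $\J=\{[0,1/3],[1/3,2/3],[2/3,1]\}$ — most likely each third of $S^1$ subdivided into two (or three) subintervals, so that $\#\B$ is small, say $6$. On $\J$ I would build three ``base'' maps $\hat T_1,\hat T_2,\hat T_3$ in the style of Dellnitz--Froyland--Sertl \cite{DFS00}: $\hat T_i$ is piecewise-affine, Lebesgue-preserving (so every branch has slope $\pm 3$ in absolute value, giving $\vartheta=\log(1/3)$), and has the $i$-th interval of $\J$ as an almost-invariant set, meaning $\hat T_i$ maps most of $J_i$ back into $J_i$ and leaks only a small controlled fraction into the other two thirds. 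Then I set $T_i = \rho_{1/3}\circ\hat T_i$, where $\rho_{1/3}$ is rotation by $1/3$; post-composing with a rotation preserves both the piecewise-affine and Lebesgue-preserving properties and keeps $\B$ Markov (after relabelling), while now the bulk of the mass of $J_i$ is pushed into $J_{i+1}$. This gives (1) and (2) immediately, and the cocycle $\Phi$ over $\Omega=\overline{123}$ is as claimed.

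Next, let $P_i$ be the $\#\B\times\#\B$ column-stochastic matrix representing $\P_i|_{\chi(\B)}$ as in Section 3. Property (3) is the assertion that each $P_i$ has no eigenvalue in the annulus $1/3<|\lambda|<1$ other than $1$: this is a finite check on the characteristic polynomial of each $P_i$, and I would arrange the leakage parameters so that the sub-dominant eigenvalues of each individual $P_i$ sit at or below modulus $1/3$. For property (4) the relevant object is the period-$3$ product $M=P_3P_2P_1$ (the matrix of $A^{(3)}(\alpha)$), whose eigenvalues are $e^{3\lambda}$ for $\lambda$ ranging over $\Lambda(\P)=\Lambda(A(\alpha))$. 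By Proposition \ref{prop:QC}, the exceptional Lyapunov exponents are among $\frac13\log|\mu|$ for eigenvalues $\mu$ of $M$ with $1/27<|\mu|<1$. The numbers $\log\big(\sqrt[3]{8\pm2\sqrt{11}}/3\big)$ correspond to $M$ having eigenvalues $\mu_\pm$ with $27\mu_\pm = 8\pm 2\sqrt{11}$, i.e. roots of $27^2\mu^2 - 16\cdot 27\,\mu + (64-44)=0$, equivalently $729\mu^2-432\mu+20=0$. So the core computation is to tune the few free leakage parameters in the $\hat T_i$ so that the characteristic polynomial of $M=P_3P_2P_1$ has $729t^2-432t+20$ as a factor, with the complementary factor having all roots of modulus $\le 1/27$ (except the Perron root $1$, contributing $\lambda_1=0$). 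Since $(8-2\sqrt{11})/27\approx 0.051$ and the essential bound is $1/27\approx 0.037$, both roots lie strictly in $(\vartheta,0)$ after taking $\frac13\log$, confirming they are genuinely exceptional; the ``$\supset$'' in the statement leaves room for the complementary block to contribute nothing further.

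Finally, for property (5): since the cocycle is periodic, Corollary \ref{cor:decomp} (see the note following it) gives $\tilde\Omega=\Omega$, so the Oseledets splitting exists at every $\omega$. The subspace $\mathcal{W}_2(\omega)$ attached to $\lambda_2=\frac13\log\big(\sqrt[3]{8+2\sqrt{11}}/3\big)$ is, by Theorem 3(iv) and the periodicity, determined as follows: at $\omega$ with $\omega_0=i$ it is the image under $\langle\cdot\rangle$ of the $\mu_+$-eigendirection of the appropriately cyclically-ordered product $P_{i-1}\cdots P_1 P_3\cdots P_i$ (equivalently, the pushforward under $A^{(k)}$ of the $\mu_+$-eigenspace of $M$ for the appropriate $k$). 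This depends only on which of the three matrices is applied first, i.e. only on $\omega_0$; I would make this explicit by writing down the three eigenvectors. The calculation that $\mathcal{W}_2(\omega)$ ``captures the coherent structure'' — that the eigenfunction is, up to small error, positive on $J(\omega)$ and negative off it — follows from a perturbative argument in the leakage parameters: at zero leakage $M$ is a permutation-type matrix and the eigenfunction is exactly an indicator-type function, and the eigenvector depends analytically on the parameters.

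The main obstacle I expect is the reverse-engineering in property (4): one must exhibit an \emph{actual} family of Lebesgue-preserving piecewise-affine Markov maps whose period-$3$ transfer matrix has exactly the prescribed quadratic factor and whose remaining spectrum is confined below $1/27$. This is a constrained algebra problem — finitely many stochasticity and Markov constraints on the entries of $P_1,P_2,P_3$, plus the factorization condition — and making it solvable is what dictates the right choice of $\#\B$ and of the symmetry ansatz (e.g. imposing that $P_2,P_3$ are obtained from $P_1$ by conjugating with the rotation permutation, reducing everything to one matrix with two or three free parameters). Verifying property (3) simultaneously, i.e. that the \emph{individual} $P_i$ stay unexceptional while their product is exceptional, is the delicate tension that makes the example interesting, and is where I would spend most of the effort.
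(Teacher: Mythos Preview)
Your high-level strategy matches the paper's exactly: construct explicit piecewise-affine, slope-$3$, Lebesgue-preserving Markov maps, reduce to the finite matrix cocycle $A(\omega)$, check the individual spectra of the $P_i$ for property (iii), and read off the exceptional Lyapunov exponents from the eigenvalues of the period-$3$ product $P_3P_2P_1$ via Lemma~\ref{lem:eta}; the Oseledets space $\mathcal W_2(\omega)$ is then the eigenspace of the appropriately cycled product and depends only on $\omega_0$ by periodicity.

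The differences are in the construction details, and here you should be warned that your proposed ansatz diverges from what the paper actually does. First, the paper takes $\#\B=9$ (each $J_i$ cut into thirds), not $6$; the maps are defined by $T_i(x)=3x-j/3+G_{i,j}/9\pmod 1$ on $B_j=[(j-1)/9,j/9)$ for an explicit $3\times 9$ integer matrix $G$, and the spectra are simply computed. Second --- and this is the point most likely to cost you time --- the paper explicitly remarks that the three maps \emph{cannot} be expressed as rotations of a single map. Your rotation-conjugation ansatz (take $\hat T_i=\rho^{i-1}\hat T_1\rho^{-(i-1)}$ and $T_i=\rho\circ\hat T_i$) forces $P_3P_2P_1=\hat P_1^3$, so the exceptional eigenvalues $(8\pm 2\sqrt{11})/27$ would have to be perfect cubes of eigenvalues of a single stochastic matrix $\hat P_1$, while simultaneously $R\hat P_1$ (which is conjugate to every $P_i$) must have no eigenvalue of modulus in $(1/3,1)$. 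That is a much more rigid system than the one the paper solves, and it is not clear it is consistent with the prescribed numbers; the paper sidesteps this by allowing three genuinely different maps and simply exhibiting them. (Minor slip: in your discussion of (v) you wrote $\lambda_2=\tfrac13\log(\sqrt[3]{8+2\sqrt{11}}/3)$; drop the extra $\tfrac13$, consistent with your own earlier computation.)
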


For periodic map cocycles, one can find Lyapunov spectral points from
the eigenvalues of the cyclic composition of Perron--Frobenius
operators.

\begin{lemma}\label{lem:eta}
Consider a periodic map cocycle $\Phi:\Z^+\times\Omega\times I\to
\Omega\times I$ of period $R$. If $\eta$ is an
eigenvalue of the Perron--Frobenius operator $\P^{(R)}(\omega)$, then
$$
\frac{\log \eta}{R}\in\Lambda(\P(\omega)).
$$
\end{lemma}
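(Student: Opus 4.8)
The plan is to exploit the periodicity to reduce the Lyapunov-exponent computation to the spectral radius of a single linear operator, namely $\P^{(R)}(\omega)$. First I would observe that, since the cocycle has period $R$, we have $\sigma^R\omega=\omega$, and therefore for every $k\in\mathbb{N}$ the cocycle relation gives $\P^{(kR)}(\omega)=\bigl(\P^{(R)}(\omega)\bigr)^k$. Thus the asymptotics of $\|\P^{(n)}(\omega)f\|$ along the subsequence $n=kR$ are governed entirely by the powers of the single bounded operator $M:=\P^{(R)}(\omega):\BV\to\BV$.

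Next, suppose $\eta$ is an eigenvalue of $M$ with eigenfunction $g\in\BV\setminus\{0\}$ (if $\eta$ is real and positive this is immediate; in general one works with the real and imaginary parts of a complex eigenfunction, or simply notes that $\|M^k g\|=|\eta|^k\|g\|$, so it is $\log|\eta|$ that is relevant — I will assume $\eta>0$ as the statement's notation $\log\eta$ suggests, otherwise replace $\eta$ by $|\eta|$ throughout). Then $\|M^k g\|=\eta^k\|g\|$, so that
\begin{align*}
\lambda(\omega,g)&=\limsup_{n\to\infty}\frac{1}{n}\log\|\P^{(n)}(\omega)g\|
\geq \limsup_{k\to\infty}\frac{1}{kR}\log\|\P^{(kR)}(\omega)g\|\\
&=\limsup_{k\to\infty}\frac{1}{kR}\log\bigl(\eta^k\|g\|\bigr)=\frac{\log\eta}{R}.
\end{align*}
To get the reverse inequality and hence equality, I would interpolate over the bounded gaps: for general $n$, write $n=kR+j$ with $0\le j<R$, and estimate $\|\P^{(n)}(\omega)g\|=\|\P^{(j)}(\sigma^{kR}\omega)M^kg\|=\|\P^{(j)}(\omega)M^kg\|\le C\eta^k\|g\|$, where $C=\max_{0\le j<R}\|\P^{(j)}(\omega)\|<\infty$ since each generator is a bounded operator on $\BV$. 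Dividing by $n$ and taking $\limsup$ gives $\lambda(\omega,g)\le(\log\eta)/R$. Hence $\lambda(\omega,g)=(\log\eta)/R\in\Lambda(\P(\omega))$ by definition of the Lyapunov spectrum.

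I do not anticipate a serious obstacle here; the only mild subtlety is the treatment of complex or non-positive eigenvalues, and the boundedness of $M$ on $\BV$ (which follows because each $\P_i$ is a Markov operator, hence an $L^1$-contraction, and is variation-bounded by the Lasota--Yorke estimate underlying Keller's theorem, so that finite compositions are bounded on the $\BV$ norm). The argument uses nothing beyond the cocycle identity, periodicity $\sigma^R\omega=\omega$, and the elementary fact that $\|M^kg\|=|\eta|^k\|g\|$ for an eigenfunction $g$ of $M$.
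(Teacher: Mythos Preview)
Your proposal is correct and follows essentially the same route as the paper: pick an eigenfunction $g$, use periodicity to write $\P^{(kR+j)}(\omega)g=\eta^k\,\P^{(j)}(\omega)g$, and sandwich $\|\P^{(n)}(\omega)g\|$ between fixed multiples of $\eta^k$. The paper's version is marginally more economical in that it bounds directly by $\min_{0\le j<R}\|\P^{(j)}(\omega)g\|$ and $\max_{0\le j<R}\|\P^{(j)}(\omega)g\|$ rather than invoking operator-norm boundedness of $\P^{(j)}(\omega)$, so the appeal to Lasota--Yorke is unnecessary.
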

\begin{proof}
  There exists a function $0\neq f\in\BV$ such that $\P^{(R)}(\omega)
  f= \eta f$. Hence for any $k\in\mathbb{N}$ and $0\leq r<R$,
$$
\min_{0\leq i<R}\{\|\P^{(i)}(\omega) f\|\}\leq\frac{\|\P^{(kR+r)}(\omega) f\|}{\eta^k}
\leq \max_{0\leq i<R}\{\|\P^{(i)}(\omega) f\|\},
$$
and the result follows.
\end{proof}

\begin{proof}[Proof of Theorem \ref{thm:pereg}]
Consider the partition $\J=\{J_1,J_2,J_3\}$ of $S^1$ into the subintervals
$J_i=[(i-1)/3,i/3]$. Let $\Phi:\Z^+\times\Omega\times I\to \Omega\times
I$ be the map cocycle with generator $\tilde{\Phi}(\omega)=T_{\omega_0}$,
where the maps $\T=\{T_1,T_2,T_3\}$ are given by
$$
T_i(x)=3x-\frac{j}{3}+\frac{G_{i,j}}{9}\mod{1}, \quad x\in
B_j=\left[\frac{j-1}{9},\frac{j}{9}\right), \quad j=1,\ldots,9,
$$
where
\begin{align*}
  G=\left(\begin{array}{ccccccccc}
      6 & 7 & 6 & 1 & 3 & 0 & 4 & 3 & 0 \\
      3 & 6 & 5 & 0 & 0 & 8 & 3 & 6 & 2 \\
      0 & 6 & 7 & 1 & 0 & 6 & 3 & 3 & 4 \\
\end{array}
\right).
\end{align*}
The graphs of $T_1,T_2,T_3$ are shown in Figure \ref{fig:T1T2T3}: note
that, by construction, each map $T_i$ largely maps the interval $J_i$
into the interval $J_{i+1}$, taking indices modulo $3$:
in fact, for $i\in {1,2,3}$,
$$
\frac{m(J_i\cap T^{-1}J_{i+1})}{m(J_i)}=\frac{8}{9}.
$$
Thus we have a coherent structure built around the family of intervals 
$J:\Omega\to\J$ given by $J(\omega)=J_{\omega_0}$. 

\begin{figure}[htbp]
    \centering
        \psfrag{T1}{$T_1$}
        \psfrag{T2}{$T_2$}
        \psfrag{T3}{$T_3$}
        \includegraphics[width=0.90\textwidth]{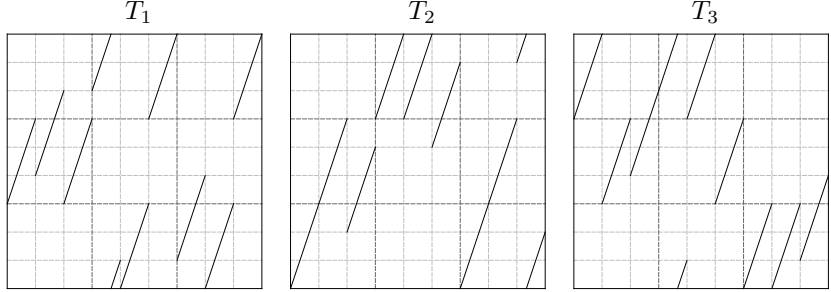}
        \caption{Graphs of $T_1,T_2,T_3$.}
    \label{fig:T1T2T3}
  \end{figure}
Note also that each map $T_i$ is piecewise-affine expanding and there is
a common Markov partition for $\T$ given by $\B=\{B_i:i=1,\ldots,9\}$.
Notice that for each map $T\in\T$ and interval $B\in\B$, $DT|_{B}=3$, and
so $\vartheta(\omega)=\log 1/3$ for each $\omega\in\Omega$.

Moreover, for each map $T\in\T$ and interval $B\in\B$, the preimage
$T^{-1}B$ has precisely three components, each of one third of the
length of $B$.  Thus each $T\in\T$ preserves Lebesgue measure, and
hence each $\Phi^{(n)}(\omega)$, $\omega\in\Omega$ and
$n\in\mathbb{N}$, does also.

As before, let $P_i$ denote the matrix of the restriction
$\P_i|_{\chi(\B)}$ with respect to the basis $\{\chi(B)\}_{i\in Z_9}$.
Here $P_i=\Gamma_{T_i}/3$ is the one third scaling of the transition
matrix $\Gamma_{T_i}$, which is itself easily observed from the graph
of $T_i$: the $(p,q)$th entry of the 0-1 matrix $\Gamma_i$ is $1$ if
and only if the graph of $T_i$ intersects the $(p,q)$th square of
$\B\times \B$.  Each matrix $P_i$ has a simple eigenvalue $1$, and all
other non-zero eigenvalues lie on the circle of radius $1/3$:
$$\begin{array}{l}
  \mathrm{spec}(P_1)=(1,-1/3,-1/3,0,\ldots,0)\\
  \mathrm{spec}(P_2)=(1,1/3,0,\ldots,0)\\
  \mathrm{spec}(P_3)=(1,-1/3,-1/6\pm\mathrm{i}\sqrt{3}/6,0,\ldots,0).\\
\end{array}
$$
Unlike in Theorem \ref{thm:nonpereg} in the following section, the 
maps used here cannot be expressed as different rotations of a single map.

We can find slowly decaying functions by examining the triple composition
\mbox{$\Phi^{(3)}(\alpha)=T_3\circ T_2\circ T_1$}. The
Perron--Frobenius operator $\P^{(3)}(\alpha)$,
when restricted to the
space $\chi(\B)$, can be represented by the matrix
$A^{(3)}(\alpha)=P_3P_2P_1$. We have
$$
\spec(A^{(3)}(\alpha))=\left( 1, \frac{2}{27}(4\pm
  2\sqrt{11}),0,\ldots,0 \right)
$$
Since the cocycle is periodic, we find that the spectrum of
$A^{(3)}(\omega)$ is independent of $\omega\in\Omega$.  Applying
Lemma \ref{lem:eta} we have that $\Lambda(\P)$ has the two exceptional
elements with approximate values
$$
\lambda_2\approx \log 0.8153,\quad \lambda_3\approx \log 0.3699.
$$
Moreover, these Lyapunov exponents are achieved by the corresponding
eigenvectors of $A^{(3)}(\omega)$. For $\omega=\alpha$, the space
$\W{2}{\alpha}$ is spanned by the second eigenvector $w_2$ of the matrix
$A^{(3)}(\alpha)=P_3P_2P_1$, with approximate entries
$$
w_2=(0.105, 0.193, 0.193, 0.008, -0.059, -0.059,-0.113, -0.134,
-0.134),
$$
and the graph of $\langle w_2\rangle\in \chi(\B)$, which spans
$\mathcal{W}_2(\alpha)$, is shown in Figure \ref{fig:V2}.
\begin{figure}
  \centering \psfrag{v2}{$\langle w_2\rangle$}
        \includegraphics[width=0.50\textwidth]{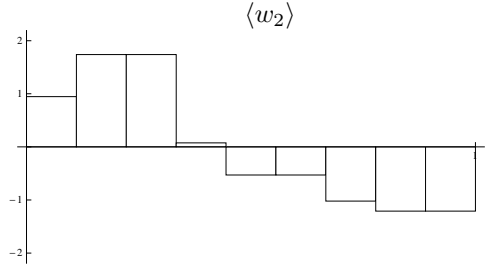}
        \caption{The graph of $\langle w_2\rangle\in \chi(\B)$ for Theorem 1.}
    \label{fig:V2}
  \end{figure}
For $i=1,2$, $\mathcal{W}_2(\sigma^i\alpha)$ is spanned by
$\langle w_2\rangle\circ\rho^{-i}$, where $\rho:S^1\to S^1$ is the 
rotation $\rho(x)=x+1/3\mod{1}$.
\end{proof}

Evidence of the cyclic coherent structure is visible in the second
eigenfunction of the Perron--Frobenius operator. Note that $J(\alpha)=[0,1/3]$
supports the majority of the mass of the positive part of $\langle w_2\rangle$. 
Similarly, the distinguished interval $J(\sigma^i\alpha)=[(i-1)/3,i/3]$, $i=1,2$, 
is picked up by  $\langle w_2\rangle\circ\rho^{-i}$.

\section{Construction of non-periodic cocycles with exceptional
  Lyapunov spectrum}

We now construct a non-periodic map cocycle with exceptional Lyapunov
spectrum. The map cocycle is generated by six maps, including $T_1$
used in the previous example. The shift space is taken to be a
subshift of finite type that has the Bernoulli shift on two symbols
$(Z_2^\Z,\theta,\mu)$ as a factor.

Let $\Theta\subset Z_6^\mathbb{Z}$ be the subshift of finite type
$$
\Theta:=\{\omega\in (Z_6)^\mathbb{Z}: \forall k\in\mathbb{Z},
(\omega_k,\omega_{k+1})=(i,j)\ \ \textrm{iff}\ \ E_{i,j}=1 \},
$$
with transition matrix
$$
E=(E_{i,j})_{1\leq i,j\leq 6}=\left(
\begin{array}{ccc|ccc}
  0 & 1 & 0 & 0 & 1 & 0 \\
  0 & 0 & 1 & 0 & 0 & 1 \\
  1 & 0 & 0 & 1 & 0 & 0 \\
  \hline
  0 & 0 & 1 & 0 & 0 & 1 \\
  1 & 0 & 0 & 1 & 0 & 0 \\
  0 & 1 & 0 & 0 & 1 & 0 \\
\end{array}
\right).
$$
We let $\sigma:\Theta\to\Theta$ be the left shift, and $p$ the uniform
measure on $\Theta$.  Notice that $(\{0,1\}^\Z,\theta,\mu)$ is a
factor of $(\Theta,\sigma,p)$ via the mapping
$$
h(\omega)_i= \left\{
\begin{array}{ll}
  0, & \omega_i\in\{1,2,3\}, \\
  1, & \omega_i\in\{4,5,6\}. \\
\end{array}
\right.
$$

The six maps are obtained from $T_1$ by rotations, and constructed so that
\begin{align}\label{eqn:nonperIso1}
  \frac{m(J_i\cap T^{-1}J_{i+1})}{m(J_i)} & = \frac{8}{9}\quad
   \mathrm{for}\: i=1,2,3,\\
  \frac{m(J_i\cap T^{-1}J_{i-1})}{m(J_i)} & = \frac{8}{9}\quad
  \mathrm{for}\: i=4,5,6.\label{eqn:nonperIso2}
\end{align}
From these maps we construct a map cocycle with a non-periodic coherent 
structure that is responsible for the slow decay.

\begin{theorem}\label{thm:nonpereg}
There exists a collection $\S$ of six piecewise-affine expanding maps
$S_1,\ldots,S_6:S^1\to S^1$ with a common Markov partition $\B$ that
generate a map cocycle $\Phi:\Z^+\times\Theta\times S^1\to  S^1$ with the
following properties for $i=1,\ldots,6$:
\begin{enumerate}
\item each map $S_i$ preserves Lebesgue measure;
\item $\vartheta=\log 1/3$;
\item the restricted Perron--Frobenius operator $\P_i|_{\chi(\B)}$ has
  no exceptional eigenvalues;
\item for each $\omega\in\Theta$, $\Lambda(\P(\omega))$ contains a 
unique exceptional exponent
$$
\log \frac{1+\sqrt{2}}{3}.
$$
\item there exists an Oseledets decomposition for all $\omega\in
  \Theta$, and the Oseledets subspace $\mathcal{W}_2(\omega)$ depends
  only on $\omega_0$.
\end{enumerate}
\end{theorem}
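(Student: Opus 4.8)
The plan is to follow the template of the proof of Theorem~\ref{thm:pereg}, establishing the elementary properties (i)--(iii) from the construction and then extracting the exceptional exponent and the Oseledets bundle $\mathcal W_2$ by hand; the new ingredient is the factor map $h:(\Theta,\sigma,p)\to(\{0,1\}^\Z,\theta,\mu)$ together with the rotational symmetry of the six maps. First I would pin down the construction: take $S_1=T_1$ and obtain $S_2,\ldots,S_6$ from $T_1$ by conjugating with powers of the rotation $\rho(x)=x+1/3$ and with the reflection $x\mapsto -x$, chosen so that \eqref{eqn:nonperIso1}--\eqref{eqn:nonperIso2} hold. Then $\B=\{B_1,\ldots,B_9\}$ from Theorem~\ref{thm:pereg} is a common Markov partition for $\S$, each $S_i$ has $DS_i|_{B_j}=3$, and each $S_i$ preserves Lebesgue measure (because $T_1$, $\rho$ and $x\mapsto -x$ do); this gives (i) and (ii) with $\vartheta(\omega)\equiv\log 1/3$. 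For (iii), each matrix $P_i$ representing $\P_i|_{\chi(\B)}$ is a coordinate-permutation conjugate of $P_1$, so $\spec(P_i)=\spec(P_1)=(1,-1/3,-1/3,0,\ldots,0)$, whose non-unit eigenvalues all have modulus $1/3=e^\vartheta$ and hence are not exceptional.

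For (iv) and (v) I would build the coherent bundle directly rather than invoke measurability. The coherent structure is the tracked position $c(\omega)\in\{1,2,3\}$, $c(\omega)\equiv\omega_0\pmod 3$; inspecting $E$ shows $c(\sigma\omega)=c(\omega)+1$ when $h(\omega)_0=0$ and $c(\sigma\omega)=c(\omega)-1$ when $h(\omega)_0=1$, and the generator attached to a symbol moves the mass of $J_{c(\omega)}$ into $J_{c(\sigma\omega)}$. Imitating the computation of $\langle w_2\rangle$ in Theorem~\ref{thm:pereg}, I would exhibit mean-zero step functions $\psi_1,\psi_2,\psi_3\in\chi(\B)$ and a scalar $\kappa$ with $|\kappa|=\tfrac{1+\sqrt 2}{3}$ ($\tfrac{1+\sqrt 2}{3}$ being a root of a quadratic with rational coefficients, the source of the $\sqrt 2$) satisfying $\P_{S_i}\psi_{c(\omega)}=\kappa\,\psi_{c(\sigma\omega)}$ for every allowed $\omega_0=i$. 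By the rotational symmetry this reduces to two checks, one forward generator and one backward generator. Setting $\mathcal W_2(\omega)=\langle\psi_{c(\omega)}\rangle$ yields $\P(\omega)\mathcal W_2(\omega)=\mathcal W_2(\sigma\omega)$ for \emph{all} $\omega\in\Theta$, a bundle depending only on $\omega_0$, and telescoping gives $\P^{(n)}(\omega)\psi_{c(\omega)}=\kappa^n\psi_{c(\sigma^n\omega)}$, so the Lyapunov exponent along $\mathcal W_2$ equals $\log\tfrac{1+\sqrt 2}{3}$; since $\tfrac{1+\sqrt 2}{3}\in(1/3,1)$ this is exceptional, proving (v) and the existence half of (iv).

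It remains to show $\log\tfrac{1+\sqrt 2}{3}$ is the \emph{only} exceptional exponent. By Proposition~\ref{prop:QC} every exceptional exponent of $\P(\omega)$ lies in $\Lambda(\P(\omega)|_{\chi(\B)})$, and Corollary~\ref{cor:decomp} (via Theorem~\ref{thm:main}) furnishes an invariant splitting $\chi(\B)=\langle\mathbf 1\rangle\oplus\mathcal W_2(\omega)\oplus N_\omega$. One then shows the Lyapunov exponent of the cocycle on $N_\omega$ is $\le\vartheta$: this needs the full spectral data $\spec(P_i)=(1,-1/3,-1/3,0,\ldots,0)$ together with the compatibility of the $(-1/3)$-eigenspaces of the $P_i$ along orbits, so that products $P_{S_{i_n}}\cdots P_{S_{i_1}}$ restricted to the $N$-bundle contract at rate $1/3$ up to a uniform constant --- the non-periodic analogue of reading uniqueness off $\spec(P_3P_2P_1)$ in Theorem~\ref{thm:pereg}. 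Combining this with Lemma~\ref{lem:Fbound} for the complementary space $F=\BV/\chi(\B)$ gives that the only Lyapunov exponents of $\P(\omega)$ above $\vartheta$ are $0$ and $\log\tfrac{1+\sqrt 2}{3}$.

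I expect the main obstacle to be the construction itself, and in particular the verification that the scalar $\kappa$ is the same for the forward and the backward generators. This is exactly what makes the random product of the two generator-types driven by the Bernoulli factor $(\{0,1\}^\Z,\theta,\mu)$ have a \emph{deterministic} growth rate --- equal to $\log\tfrac{1+\sqrt 2}{3}$ for \emph{every} $\omega$ rather than merely $\int\log|\kappa_{\omega_0}|\,\mathrm{d}p$ almost everywhere --- and it is a genuine constraint that the maps $S_4,S_5,S_6$ must be engineered to satisfy; the bulk of the work is the routine but delicate linear algebra that checks it.
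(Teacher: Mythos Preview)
Your plan captures one useful idea --- building an explicit rank-one equivariant bundle $\psi_{c(\omega)}$ with $\P_{S_i}\psi_{c(\omega)}=\kappa\,\psi_{c(\sigma\omega)}$ --- but it misses the structural mechanism that actually makes the example work, and this shows up as a real gap in your uniqueness argument.

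The paper does \emph{not} obtain the $S_i$ by conjugating $T_1$; it introduces a new map $S$ (with $\spec(P_S)=(1,\tfrac{1\pm\sqrt2}{3},0,\ldots,0)$, which is where $\tfrac{1+\sqrt2}{3}$ comes from) and sets $S_i=\rho^{l_i}\circ S\circ\rho^{r_i}$ with carefully chosen exponents $l=(1,2,0,2,0,1)$, $r=(0,2,1,0,2,1)$.  These are not conjugations, and in fact $\spec(P_i)=(1,-1/3,-1/3,0,\dots)$ for $i\le 3$ but $(1,0,\dots,0)$ for $i>3$, contradicting your claim that all $P_i$ are permutation-conjugate to $P_1$.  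The point of the exponents is that $l_i+r_j\equiv 0\pmod 3$ whenever $E_{i,j}=1$, so all interior rotation factors cancel and the entire matrix cocycle collapses to
\[
A^{(n)}(\omega)=R^{\,l(\omega_{n-1})}\,(P_S)^n\,R^{\,r(\omega_0)}.
\]
Because $R$ is orthogonal, $\|A^{(n)}(\omega)v\|=\|(P_S)^n R^{\,r(\omega_0)}v\|$, and the \emph{full} Lyapunov spectrum of the cocycle is read off as $\log\spec(P_S)$ in one line; the Oseledets space $\mathcal W_2(\omega)$ is then simply $\langle R^{-r(\omega_0)}w_2\rangle$, depending only on $\omega_0$.

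Your outline recovers (iv)-existence and (v) once the right $\psi_c$ are in hand, but the construction you describe does not display the source of the number $\tfrac{1+\sqrt2}{3}$, and your uniqueness step is where the argument breaks: ``compatibility of the $(-1/3)$-eigenspaces along orbits'' is not established and is in fact delicate (and false as stated, since for $i>3$ there is no $(-1/3)$-eigenspace at all).  The telescoping identity above is precisely the missing idea: it replaces any orbit-by-orbit compatibility analysis by reduction to the spectrum of a single matrix $P_S$, and it gives uniqueness of the exceptional exponent and the explicit $\mathcal W_2(\omega)$ simultaneously, with no appeal to Corollary~\ref{cor:decomp}.
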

\begin{proof}
  Let $\rho:S^1\to S^1$ be the rotation $x\mapsto x+1/3\mod{1}$ and
  let $S:S^1\to S^1$ be the map given by
$$
S(x)=3x-\frac{j}{3}+\frac{g_{j}}{9}\mod{1}, \quad x\in
B_j=\left[\frac{j-1}{9},\frac{j}{9}\right), \quad j=1,\ldots,9,
$$
where $g=(3,4,3,7,0,6,1,0,6)$. The interval $J_1=[0,1/3]$ is an 
almost-invariant subset of $S^1$, with $m(J_1\cap S^{-1}J_1)/m(J_1)=8/9$. 
Let $P_S$ be the matrix of $\P_S|_{\chi(\B)}$ with respect to the basis
$\chi(\B)$. The spectrum of $P_S$ is
$$
\mathrm{spec}(P_S)=\left(1,\frac{1\pm \sqrt{2}}{3},0,\ldots,0\right).
$$

We define the collection of maps $\S=\{S_i\}_{i\in Z_6}$ in terms 
of $S$ and $\rho$:
\begin{align*}
  S_1 & =  \rho\circ S \\
  S_2 & =  \rho^2\circ S \circ \rho^2 \\
  S_3 & =  S \circ \rho \\
  S_4 & =  \rho^2\circ S \\
  S_5 & =  S \circ \rho^2 \\
  S_6 & =  \rho\circ S \circ \rho.
\end{align*}
\begin{figure}
  \centering
        \psfrag{S1}{$S_1$}
        \psfrag{S2}{$S_2$}
        \psfrag{S3}{$S_3$}
        \psfrag{S4}{$S_4$}
        \psfrag{S5}{$S_5$}
        \psfrag{S6}{$S_6$}
    \includegraphics[width=0.90\textwidth]{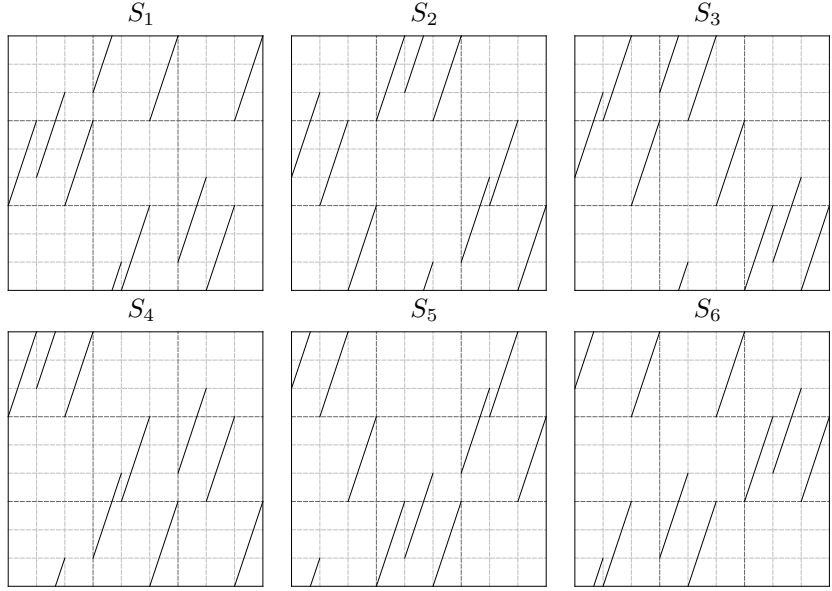}
                \caption{Graphs of $S_1,\ldots,S_6$.}
  \label{fig:S1toS6}
      \end{figure}
The graphs of $S_1,\ldots, S_6$ are shown in Figure \ref{fig:S1toS6}.
Note that the graph of $S_1$  is the same as that of $T_1$ shown in
Figure \ref{fig:T1T2T3}.

Let $\Phi:\Z^+\times \Theta\times S^1\to S^1$ be the map cocycle with
generator $\tilde{\Phi}(\omega)=S_{\omega_0}\in\S$. Let 
$\J=\{J_i\}_{i=1}^3$, where $J_i=[(i-1)/3,i/3]$. As a consequence 
of (\ref{eqn:nonperIso1}) and (\ref{eqn:nonperIso2}), we have a coherent
structure built around the family of intervals $J:\Theta\to\J$, where
\begin{align*}
J(\omega)=\left\{\begin{array}{ll}
   J_{\omega_0}, &\textrm{if}\ \omega_0\leq 3;\\
   J_{\omega_0-3}, &\textrm{if}\ \omega_0> 3.
   \end{array}\right.
\end{align*}
Let $\P_i$ be
the Perron--Frobenius operator of $S_i$. Let $\P:\Z^+\times
\Theta\times S^1\to S^1$ the Perron--Frobenius cocycle associated to
$\Phi$. Let $P_i$ be the matrix representing $\P_i|_{\chi(\B)}$ with
respect to the basis $\chi(\B)$ and let $A:\Z^+\times \Theta\times
S^1\to S^1$ be the matrix cocycle with generator
$\tilde{A}(\omega)=P_{\omega_0}$. Let $R$ denote the matrix with
$R_{i,j}=1$ if $i-j=3\mod{9}$ and $0$ otherwise. Note that $R^3$ is
the identity matrix. For $i=1,\ldots,6$, the formula for $P_i$ is
obtained directly from the formula for $S_i$ by replacing $\rho$ by
$R$ and replacing $S$ by $P_S$. Thus, for $i=1,\ldots,6$, we may write
$P_i=R^{l_i}.P_S. R^{r_i}$, where $l=(1,2,0,2,0,1)$ and
$r=(0,2,1,0,2,1)$.

One may confirm that
$$
\spec(P_i)=\left\{ \begin{array}{ll}
    (1, -1/3,-1/3,0,\ldots,0), &\textrm{if}\ i\leq 3;\\
    (1,0,\ldots,0), &\textrm{if}\ i> 3,
\end{array}\right.
$$
and so no map in $\S$ has exceptional eigenvalues.

Note that whenever $E_{i,j}=1$, we find $l_i+r_j=0\mod{3}$. Hence for
any $\omega\in\Theta$, we have that
$$
A^{(n)}(\omega)= R^{l(\omega_{n-1})}\circ (P_S)^n\circ
R^{r(\omega_{0})},
$$
with all inner $R$ factors cancelling.

Hence for any $v\in\R^M$,
\begin{align*}
  \|A^{(n)}(\omega)v \| & = \|R^{l_{\omega_{n-1}}}.(P_S)^n .R^{r_{\omega_0}}v\| \\
  & = \|(P_S)^n .R^{r(\omega_0)}v\| \\
  & = \|(P_S)^n .v'\|, \\
\end{align*}
where $v'=R^{r(\omega_0)}v$. So $\Lambda(A)$ is precisely the set of
logarithms of the eigenvalues of $P_S$, and in particular, is
independent of $\omega$. Thus, $\Lambda(\P)$ has a unique exceptional
exponent $\log (1+\sqrt{2})/3$ with approximate value $\log 0.8047$ 
for every $\omega\in\Theta$.

Let $w_2$ be an eigenvector of $P_S$ corresponding to the second
largest eigenvalue $(1+\sqrt{2})/3$.  The graph of $\langle
w_2\rangle\in \chi(\B)$, which spans $\mathcal{W}_2(\alpha)$, is shown
in Figure \ref{fig:V2nonper}.
\begin{figure}
  \centering \psfrag{v2}{$\langle w_2\rangle$}
        \includegraphics[width=0.50\textwidth]{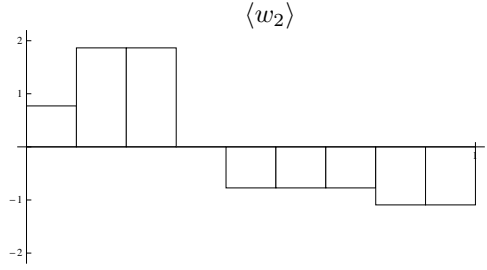}
        \caption{The graph of $\langle w_2\rangle\in \chi(\B)$ for Theorem 2.}
    \label{fig:V2nonper}
  \end{figure}
Moreover, we have an Oseledets splitting for every $\omega\in\Theta$: 
for each $\omega\in\Theta$, the function $\langle
R^{-r(\omega_0)}w_2\rangle$ spans the Oseledets  subspace
$\mathcal{W}_{2}(\omega)$ associated to $\log (1+\sqrt{2})/3$ and thus
$\mathcal{W}_{2}(\omega)$ depends only on $\omega_0$.
\end{proof}

As in the periodic example, the coherent structure
responsible for the slow decay is detected by the second eigenfunction of the
Perron--Frobenius operator.  When $\omega_0=1$, $J(\omega)=[0,1/3]$ is the
distinguished interval for $\Phi(\omega)$, and this interval supports the
majority of the mass of the positive part of the function $\langle
w_2\rangle$ spanning $\mathcal{W}_2(\omega)$.  More generally, for
$\omega\in\Theta$, the positive part of 
$\langle w_2\rangle\circ\rho^{-r(\omega_0)}=\langle R^{-r(\omega_0)} w_2\rangle$ 
is supported approximately on the interval $J(\omega)$.

\section{Numerical approximation of Oseledets subspaces}

In this section we outline a numerical algorithm to approximate the
$W_i(\omega)$ subspaces. The Oseledets splittings for the cocycles in 
Theorem 1 and Theorem 2 were explicitly constructed as eigenvectors.
In general, the Oseledets splittings are difficult to compute. The
algorithm is based on the push-forward limit argument developed in the
proof of Theorem \ref{thm:main}.

\begin{algorithm}[Approximation of the Oseledets subspaces
  $W_i(\omega)$ at $\omega\in\Omega$.]
\

\begin{enumerate}
\item Choose $M,N>0$ and form
$$
\Psi^{(M)}(\sigma^{-N}\omega):=(A^{(M)}(\sigma^{-N}\omega)^\mathrm{T}
A^{(M)}(\sigma^{-N}\omega))^{1/2M}
$$ as an approximation to the standard limiting matrix
$$
B(\sigma^{-N}\omega):=\lim_{M\to\infty}
\left(A^{(M)}(\sigma^{-N}\omega)^\mathrm{T}
  A^{(M)}(\sigma^{-N}\omega)\right)^{1/2M}
$$
appearing in the Multiplicative Ergodic Theorem.
\item Calculate the orthonormal eigenspace decomposition of
$\Psi^{(M)}(\sigma^{-N}\omega)$,
 denoted by $U_i^{(M)}(\sigma^{-N}\omega)$, $i=1,\ldots,\ell$.
\item Define
  $W_i^{(M,N)}(\omega):=A^{(N)}(\sigma^{-N}\omega)U_i^{(M)}(\sigma^{-N}\omega)$
  via the push forward under the matrix cocycle.
\item $W_i^{(M,N)}(\omega)$ is our numerical approximation to $W_i(\omega)$.
\end{enumerate}
\end{algorithm}

\begin{remarks}
\

\begin{enumerate}
\item Theorem \ref{thm:main} states that $W_i^{(\infty,N)}(\omega)\to W_i(\omega)$ as
$N\to\infty$.
\item This algorithm also provides an efficient numerical method for 
	calculating the Oseledets subspaces for two-sided linear cocycles.
\item There is freedom in the choice of relative sizes of $M$ and $N$: in
order to sample equal numbers of positive and negative terms of
$\omega$, we take $M=2N$.
\end{enumerate}
\end{remarks}

\begin{example}
To illustrate this technique, we calculate the 
Oseledets subspaces $\W{2}{\omega}$, $\omega\in\Theta$, 
for a non-periodic map cocycle, created from the
maps of Theorem \ref{thm:pereg} and the sequence space $\Theta$ of
Theorem \ref{thm:nonpereg}. Unlike the example of Theorem 2, this example does not
have a simple structure that makes it possible to relate the Oseledets
subspaces to those of a single autonomous transformation.

Let $\T=\{T_i\}_{i=1}^6$ denote the
collection of piecewise-affine expanding maps of the circle consisting
of the three maps $T_1,T_2,T_3$ defined in Theorem \ref{thm:pereg} and
the three maps $T_4=\rho\circ T_1$, $T_5=\rho\circ T_2$ and
$T_6=\rho\circ T_3$, where $\rho:S^1\to S^1$ is the rotation
$\rho(x)=x+1/3\mod{1}$ as before. The graphs of the maps in $\T$ are
shown in Figures \ref{fig:T1T2T3} and \ref{fig:T4T5T6}.  Let
$\Phi:\Z^+\times \Theta\times S^1\to S^1$ be the map cocycle generated
by $\T$. The collection $\T$ has a common Markov partition
$\B=\{[(i-1)/9,i/9]:i=1,\ldots,9\}$.
We expect to find an exceptional Lyapunov spectrum since the cocycle has 
a coherent structure similar to that of Theorem 2, built 
around the family of intervals $J:\Theta\to\J$ given by 
$$
J(\omega)=\left\{\begin{array}{ll}
   J_{\omega_0}, &\textrm{if}\ \omega_0\leq 3;\\
   J_{\omega_0-3}, &\textrm{if}\ \omega_0> 3.
   \end{array}\right.
$$

\begin{figure}
    \centering
    \psfrag{T4}{$T_4$}
    \psfrag{T5}{$T_5$}
    \psfrag{T6}{$T_6$}
        \includegraphics[width=0.90\textwidth]{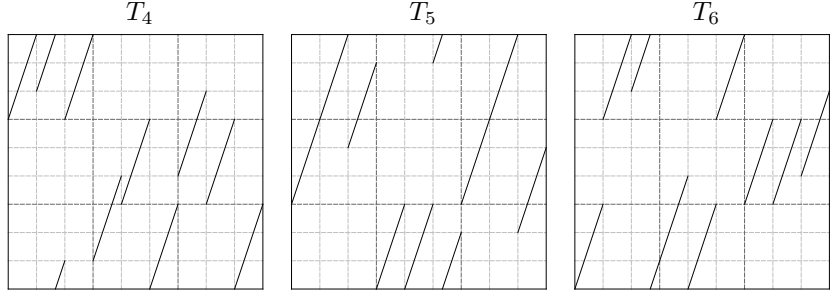}
        \caption{Graphs of $T_4$, $T_5$ and $T_6$.}
    \label{fig:T4T5T6}
  \end{figure}

We generate a test sequence in $\Theta$ as follows.  Let
$\hat{\alpha}^*\in \{0,1\}^\mathbb{N}$ be the fractional part of the
binary expansion of $\pi$:
$$
\hat{\alpha}^*=(0, 0, 1, 0, 0, 1, 0, 0, 0, 0, 1, 1, 1, 1, 1, 1, 0, 1, 1, 0,
1, 0, 1, 0, 1, 0, 0, \ldots),
$$
and extend it to a two-sided sequence $\alpha^*\in\{0,1\}^{\mathbb{Z}}$ 
by defining $\alpha^*_i=0$ for $i<0$.
We define $\omega^*=h^{-1}(\sigma^{120}\alpha^*)$, where $h$ is 
the $3$-to-$1$ factor defined in Section 6, and we take the 
inverse branch with $\omega^*_0=1$.
Note that $\omega^*\in\Theta$ has the form
$$
\omega^*=(\ldots,1,2,3,1,2,3,4,\ldots, 5, 4, 6, 2, 3, 1, 5, 4, 3, \underline{1}, 5,
1, 5, 4, 6, 2, 6, 5, 1,\ldots),
$$
where the zeroth term is underlined.

As before, we denote by $P_i$ the matrix representation of the
Perron--Frobenius operator $\P_i|_{\chi(\B)}$ of $T_i$,
$i=1,\ldots,6$, with respect the basis $\chi(\B)$, and denote by
\mbox{$A:\mathbb{Z}^+\times \Theta\times S^1\to S^1$} the matrix 
cocycle with the generator $\tilde{A}(\omega)=P_{\omega_0}$.
The Multiplicative
Ergodic Theorem states that for almost every $\omega$,
$\Psi^{(M)}(\omega)$ converges to a limit $B(\omega)$ as $M\to\infty$,
and moreover
$$
\Lambda(A)=\log \spec(B).
$$
Calculating $\Psi^{(M)}(\omega^*)$ for $M=40$, we find that 
$\Psi^{(M)}(\omega^*)$ has a simple eigenvalue $\lambda_2\approx 0.81$, 
suggesting that that $\P$ has exceptional Lyapunov exponent
approximately equal to $\log 0.81$.

In order to approximate the Oseledets subspace $\W{2}{\omega^*}$
numerically, we set \mbox{$M=2N=40$}, form the matrix $\Psi^{(2N)}(\sigma^{-N}\omega^*)$ and
denote by $u_2^{(2N)}(\sigma^{-N}\omega^*)$ the eigenvector corresponding
to the eigenvalue $\lambda_2$.  We then calculate
$$
A^{(N)}(\sigma^{-N}\omega^*) u_2^{(2N)}(\sigma^{-N}\omega^*)
$$
and normalize to give the vector $w_2^{(2N,N)}(\omega^*)$. The unit
vector $w_2^{(2N,N)}(\omega^*)$ is our approximation to a unit vector
spanning the subspace $W_2(\omega^*)$.

Although Theorem \ref{thm:main} holds only for a full $p$-measure subset
of $\Theta$, and so can tell us nothing about a particular sequence such
as $\omega^*$, we can still check whether its conclusions hold in this
case. Taking $N=20$, we calculate for $k=0,\ldots,7$, a vector 
$w_2^{(2N,N)}(\sigma^k\omega^*)$ spanning
$W_2^{(2N,N)}(\sigma^k\omega^*)$ (see Figure \ref{fig:8vectors}).
\begin{figure}[ht]
    \centering
    \psfrag{v1}[br][br]{$k=0$}
    \psfrag{v2}{$1$}
    \psfrag{v3}{$2$}
    \psfrag{v4}{$3$}
    \psfrag{v5}{$4$}
    \psfrag{v6}{$5$}
    \psfrag{v7}{$6$}
    \psfrag{v8}{$7$}
    \includegraphics[width=0.90\textwidth]{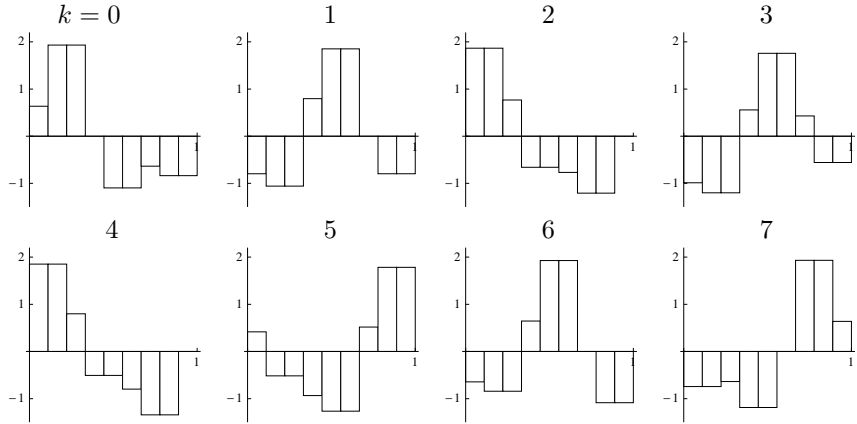}
    \caption{The graph of $\langle w_2^{(2N,N)}(\sigma^k\omega^*) \rangle$
    for $k=0,\ldots,7$.}
    \label{fig:8vectors}
  \end{figure}

Recall that $\{\omega^*\}_{i=0}^7=\{1,5,1,5,4,6,2,6\}$.
For $k=0,\ldots,7$, by examining Figure \ref{fig:8vectors}, and 
comparing with the list $(J(\sigma^k\omega^*))_{i=0}^7$ given by
\begin{align*}
([0,1/3],[1/3,2/3],[0,1/3],[1/3,2/3],[0,1/3],[2/3,1],[1/3,2/3],[2/3,1]),
\end{align*}
we see that the interval $J(\sigma^k\omega^*)$
is approximately picked up by the support of the positive part 
of $w_2^{(2N,N)}(\sigma^k\omega^*)$.

In order to check property (iii) of Theorem 3, that is, 
whether $A(\omega^*)W_2^{(2N,N)}(\omega^*)$ is close to 
$W_2^{(2N,N)}(\sigma^{k+1}\omega^*)$, we calculate the quantity
$$
\Delta^{(2N,N)}(\omega^*):= \min\left\{ \left\|\left\langle w_2^{(2N,N)}(\sigma\omega^*)\pm \frac{A(\omega^*)w_2^{(2N,N)}(\omega^*)}{\|\langle A(\omega^*)w_2^{(2N,N)}(\omega^*)\rangle\|_{L^1}}\right\rangle\right\|_{L^1} \right\},
$$
for $N=1,\ldots,20$ (see Figure \ref{fig:Logplot}).

\begin{figure}[h]
	\centering
		\includegraphics[width=0.90\textwidth]{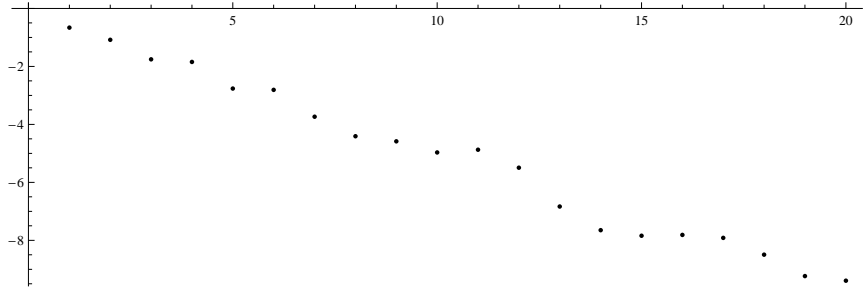}
	\caption{Graph showing $\log_{10} \Delta^{(2N,N)}(\omega^*)$ against $N$ for $N=1,\ldots,20$.}
	\label{fig:Logplot}
\end{figure}
Thus for $N=20$, there are unit $L^1$-norm functions
spanning the $\mathcal{W}_2^{(2N,N)}(\sigma\omega^*)$ and 
$\P(\omega^*)\mathcal{W}_2^{(2N,N)}(\omega^*)$ 
subspaces whose difference in $L^1$-norm is less than $10^{-8}$.

Recall that for the cocycle in Theorem 2, the Oseledets subspace $W_2(\omega)$ 
in fact independent of $\omega_i$ for $i\neq 0$. This contrasts with the current example: 
to see that here the Oseledets spaces  $W_2(\omega)$ do not depend only on $\omega_0$, 
it is enough to observe, for example, that $\omega_0=\omega_2=1$ but 
$w_2^{(2N,N)}(\omega^*)$ and $w_2^{(2N,N)}(\sigma^2\omega^*)$ are markedly dissimilar.
\end{example}

\section{Proof of the Multiplicative Ergodic Theorem for
  non-invertible matrices}\label{sec:MET}

In this section we present a strengthened version of the Multiplicative
Ergodic Theorem (MET) for the case of non-invertible matrices. Let
$\sigma$ be an invertible measure-preserving transformation of
$(\Omega,\mathcal B,\mu)$ and consider a linear cocycle
$P:\mathbb{Z^+}\times \Omega\times \R^d\to \R^d$. Note that even though
the matrices may be non-invertible, the invertibility of $\sigma$ is
crucial to the argument. If the matrices are invertible then the
two-sided cocycle is naturally defined as a map
$P\colon\mathbb{Z}\times\Omega\times \R^d\to\R^d$.

Recall that in the case of a one-sided linear cocycle (satisfying certain
integrability conditions), the MET provides an invariant flag of
subspaces of $\R^d$ characterising the exponential growth rates of all
vectors. For a two-sided cocycle, one obtains an invariant splitting of
$\R^d$ into {\sl Oseledets spaces} by considering the intersection of
each subspace in the flag of the forward cocycle with the corresponding
subspace of the flag of the backward cocycle. Non-zero vectors $v$ in the
$j$th Oseledets space $W_j(\omega)$ satisfy
$\lim_{n\to\pm\infty}(1/n)\log\|P(n,\omega,v)\|\to\lambda_j$.

In the case of a one-sided cocycle it clearly makes no sense to consider 
the limit $\lim_{n\to-\infty}(1/n)\log\|P(n,\omega,v)\|$. 
Nevertheless one may still
hope for an invariant splitting of $\R^d$ rather than an invariant flag.
This distinction is important if one is interested in the \emph{vector} 
corresponding to the one
of the top characteristic exponents: the flag would only provide an invariant
family of \emph{high-dimensional subspaces} with the property that most vectors
in the space have the correct expansion rate, whereas a splitting would
provide an invariant family of \emph{low-dimensional subspaces}, whose vectors
are responsible for all expansion at the chosen rate.

In this section we obtain a decomposition into Oseledets subspaces for a
one-sided forward cocycle over an invertible transformation by means of a
push-forward limit argument.

Let $\|\cdot\|$ denote the matrix operator norm with respect to the
Euclidean norm on $\R^d$.

\begin{theorem}\label{thm:main}
Let $\sigma$ be an invertible ergodic measure-preserving transformation
of the space $(\Omega,\mathcal B,\mu)$. Let $A\colon \Omega\to M_d(\R)$
be a measurable family of matrices satisfying
$$\int \log^+\|A(\omega)\|\,\mathrm{d}\mu(\omega)<\infty.
$$
Then there exist $\lambda_1>\lambda_2>\cdots>\lambda_\ell\geq -\infty$
and %%here
dimensions $m_1,\ldots,m_\ell$, with $m_1+\cdots+m_\ell=d$, and a
measurable family of subspaces $\W{j}{\omega}\subseteq \R^d$ such that
for almost every $\omega\in\Omega$ the following hold:
\begin{enumerate}
\item $\dim \W{j}{\omega}=m_j$;
  \item $\R^d=\bigoplus_{j=1}^\ell \W{j}{\omega}$;
  \item $A(\omega)\W{j}{\omega}\subseteq \W{j}{\sigma\omega}$ (with
    equality if $\lambda_j>-\infty$);
  \item for all $v\in \W{j}{\omega}\setminus\{0\}$, one has
   $$
   \lim_{n\to\infty}
   (1/n)\log\|A(\sigma^{n-1}\omega)\ldots A(\omega)v\|= \lambda_j.
   $$
  \end{enumerate}
\end{theorem}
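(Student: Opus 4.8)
The plan is to bootstrap the classical one-sided Multiplicative Ergodic Theorem — which gives a decreasing flag $\R^d = V_1(\omega) \supseteq V_2(\omega) \supseteq \cdots \supseteq V_{\ell+1}(\omega) = \{0\}$ with $A(\omega)V_j(\omega) \subseteq V_j(\sigma\omega)$ and the characterisation (\ref{eqn:flag}) of Lyapunov exponents — into a genuine splitting by constructing a complementary family of subspaces via a push-forward limit. The classical theorem also supplies, for almost every $\omega$, the limiting matrix $B(\sigma^{-N}\omega) = \lim_{M\to\infty}(A^{(M)}(\sigma^{-N}\omega)^{\mathrm T}A^{(M)}(\sigma^{-N}\omega))^{1/2M}$, whose eigenspaces $U_1, \ldots, U_\ell$ (ordered by decreasing eigenvalue) satisfy $U_j(\sigma^{-N}\omega) \oplus V_{j+1}(\sigma^{-N}\omega) = V_j(\sigma^{-N}\omega)$, so that $\bigoplus_{k\geq j}U_k(\sigma^{-N}\omega) = V_j(\sigma^{-N}\omega)$. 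First I would define the candidate Oseledets space at $\omega$ by pushing forward from far in the past:
$$
\W{j}{\omega} := \lim_{N\to\infty} A^{(N)}(\sigma^{-N}\omega)\, U_j(\sigma^{-N}\omega),
$$
the limit taken in the Grassmannian $G_{m_j}(\R^d)$, where $m_j = \dim U_j = \dim V_j - \dim V_{j+1}$.

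The heart of the argument is showing this limit exists. I would proceed by establishing a Cauchy estimate: the subspace $A^{(N+k)}(\sigma^{-N-k}\omega)U_j(\sigma^{-N-k}\omega)$ and $A^{(N)}(\sigma^{-N}\omega)U_j(\sigma^{-N}\omega)$ both lie inside the forward-invariant flag subspace $A^{(N)}(\sigma^{-N}\omega)V_j(\sigma^{-N}\omega) \subseteq V_j(\omega)$, and their difference can be controlled by the gap $\lambda_j - \lambda_{j+1} > 0$ together with subexponential error terms furnished by the MET (the Lyapunov regularity / tempered-ness of the relevant norms). Concretely, writing $A^{(N+k)}(\sigma^{-N-k}\omega) = A^{(N)}(\sigma^{-N}\omega)\circ A^{(k)}(\sigma^{-N-k}\omega)$, one observes that $A^{(k)}(\sigma^{-N-k}\omega)U_j(\sigma^{-N-k}\omega)$ is a perturbation of $U_j(\sigma^{-N}\omega)$ of size $O(e^{-(\lambda_j-\lambda_{j+1})k}\cdot(\text{subexp in }N))$, and applying $A^{(N)}(\sigma^{-N}\omega)$ — which expands the $V_j$-directions at rate $\lambda_j$ and the complementary directions we have quotiented out are gone — preserves a Cauchy estimate in the Grassmannian. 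Summability over $k$ then gives convergence, and the same estimate gives measurability (as an a.e. limit of measurable maps) and the dimension claim (i). I expect this convergence-and-rate bookkeeping to be the main obstacle: one must carefully separate the exponential contraction coming from the spectral gap from the subexponential fluctuations, and handle the case $\lambda_j = -\infty$ (where the relevant $U_j$ sits in the kernel-like directions and one argues $\W{j}{\omega} \subseteq \ker$-type behaviour directly, dropping the "equality" in (iii)).

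Granting the limit, the remaining properties follow more routinely. For the direct-sum decomposition (ii): since $\bigoplus_{k\geq j}U_k(\sigma^{-N}\omega) = V_j(\sigma^{-N}\omega)$ for every $N$, pushing forward gives $\bigoplus_{k\geq j}\W{k}{\omega} = \lim_N A^{(N)}(\sigma^{-N}\omega)V_j(\sigma^{-N}\omega)$, and one checks this limit equals $V_j(\omega)$ (the flag subspaces are themselves such push-forward limits, a standard fact), so the $\W{k}{\omega}$ refine the flag into a splitting. For invariance (iii): $A(\omega)\W{j}{\omega} = A(\omega)\lim_N A^{(N)}(\sigma^{-N}\omega)U_j(\sigma^{-N}\omega) = \lim_N A^{(N+1)}(\sigma^{-(N+1)}\sigma\omega)\,A(\sigma^{-N-1}\sigma\omega)\cdots$ — reindexing $N \mapsto N+1$ shows this equals $\W{j}{\sigma\omega}$ when $A(\omega)$ is injective on $\W{j}{\omega}$ (automatic when $\lambda_j > -\infty$, since vectors there grow at rate $\lambda_j$ and cannot be killed), and gives the inclusion in general. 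For the exponent characterisation (iv): a nonzero $v \in \W{j}{\omega}$ lies in $V_j(\omega)\setminus V_{j+1}(\omega)$ by (ii) and the flag refinement, so (\ref{eqn:flag}) gives $\limsup (1/n)\log\|A^{(n)}(\omega)v\| = \lambda_j$; upgrading $\limsup$ to $\lim$ uses that $v$ is in the image of the push-forward, so $v = A^{(N)}(\sigma^{-N}\omega)u$ for $u \in U_j(\sigma^{-N}\omega)$ (approximately), and the two-sided-type control of growth along $U_j$ under $A^{(N)}$ combined with Lyapunov regularity yields the genuine limit. All the a.e. statements hold on the single full-measure $\sigma$-invariant set where the classical MET conclusions and the temperedness estimates are valid.
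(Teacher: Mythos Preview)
Your overall strategy---define $W_j(\omega)$ as the push-forward limit $\lim_N A^{(N)}(\sigma^{-N}\omega)U_j(\sigma^{-N}\omega)$---is exactly the paper's approach, and your treatment of (iii) and of the bottom exponent is fine. However, your mechanism for the Cauchy estimate is wrong. You assert that $A^{(k)}(\sigma^{-N-k}\omega)U_j(\sigma^{-N-k}\omega)$ is a perturbation of $U_j(\sigma^{-N}\omega)$ of size $O(e^{-(\lambda_j-\lambda_{j+1})k})$, summable in $k$. In fact the $V_{j+1}(\sigma^{-N}\omega)$-component of this pushed-forward space, relative to its $U_j(\sigma^{-N}\omega)$-component, does \emph{not} decay in $k$; it is merely bounded, by a measurable function $M(\sigma^{-N}\omega)$ independent of $k$. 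The Cauchy decay comes entirely from the \emph{subsequent} push-forward $A^{(N)}(\sigma^{-N}\omega)$, which contracts $V_{j+1}$-directions relative to $U_j$-directions, giving $D\bigl(W_j^{(N)}(\omega),W_j^{(m)}(\omega)\bigr)\lesssim M(\sigma^{-N}\omega)\,e^{-(\lambda_j-\lambda_{j+1}-2\epsilon)N}$ uniformly in $m>N$; one then concludes via ergodicity, choosing $N$ along which $M(\sigma^{-N}\omega)$ stays bounded. Making this rigorous requires two nontrivial \emph{reversed} estimates---upper control of $\|A^{(n)}(\sigma^{-n}\omega)|_{V_{j+1}}\|$ and lower control of the smallest singular value of $A^{(n)}(\sigma^{-n}\omega)$ restricted to $U_j$---which the paper obtains from a reversed Kingman theorem (if $f_n$ is subadditive with $f_n/n\to C$, then also $f_n(\sigma^{-n}\omega)/n\to C$) and a matching reversed singular-value lemma. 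A vague appeal to ``temperedness'' does not supply these backward-in-time bounds.

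Your argument for (ii) also has a gap: you claim $\lim_N A^{(N)}(\sigma^{-N}\omega)V_j(\sigma^{-N}\omega)=V_j(\omega)$, but with non-invertible generators $A^{(N)}(\sigma^{-N}\omega)V_j(\sigma^{-N}\omega)$ can be a proper subspace of $V_j(\omega)$, and it is not clear the limit recovers all of it. The paper instead reuses the bound $M(\omega)$ to show that every unit vector in $W_j(\omega)$ has $U_j(\omega)$-component of norm at least $(1+M(\omega)^2)^{-1/2}>0$, whence $W_j(\omega)\cap V_{j+1}(\omega)=\{0\}$, and dimension-counting gives $V_j(\omega)=W_j(\omega)\oplus V_{j+1}(\omega)$. (Finally, the standard one-sided MET already gives a genuine $\lim$, not merely a $\limsup$, so your last manoeuvre in (iv) is unnecessary once $W_j(\omega)\setminus\{0\}\subset V_j(\omega)\setminus V_{j+1}(\omega)$ is established.)
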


\begin{lemma}\label{lem:frame}
  Let $B\colon\Omega\to M_d(\R)$ be a measurable mapping into the space of
  symmetric matrices such that for almost all $\omega$, $B(\omega)$
  has eigenvalues $\mu_1,\ldots,\mu_\ell$ with multiplicities
  $m_1,\ldots,m_\ell$. Then there exists a measurable
  family $\left(e^j_i(\omega)\right)_{1\le j\le \ell,\ 1\le i\le m_j}$ 
  of vectors such that the $\left(e^j_i(\omega)\right)$ form an
  orthonormal basis of $\R^d$ and $e^j_i(\omega)$ lies in the
  $\mu_j$ eigenspace of $B(\omega)$.
\end{lemma}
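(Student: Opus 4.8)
The plan is to prove this by a measurable selection argument, reducing the construction of an orthonormal eigenbasis to a measurable choice of orthonormal bases in each eigenspace, where the eigenspaces themselves vary measurably with $\omega$. First I would observe that since the eigenvalues $\mu_1,\ldots,\mu_\ell$ are fixed (independent of $\omega$), the spectral projection $\Pi_j(\omega)$ onto the $\mu_j$-eigenspace of $B(\omega)$ is given by a fixed polynomial (or a contour integral / Lagrange interpolation formula) in $B(\omega)$, namely $\Pi_j(\omega)=\prod_{k\neq j}\frac{B(\omega)-\mu_k I}{\mu_j-\mu_k}$; since $B$ is measurable and matrix multiplication and addition are continuous, each $\Pi_j$ is a measurable map $\Omega\to M_d(\R)$. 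Hence $\omega\mapsto\W{j}{\omega}:=\Pi_j(\omega)\R^d$ is a measurable family of $m_j$-dimensional subspaces (measurable, say, in the sense that the map into the Grassmannian is measurable, or equivalently that $\omega\mapsto\Pi_j(\omega)$ is measurable).

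Next I would build the orthonormal basis of each eigenspace by a measurable Gram--Schmidt procedure applied to the columns of $\Pi_j(\omega)$. Concretely, the columns $\Pi_j(\omega)f_1,\ldots,\Pi_j(\omega)f_d$ (where $f_1,\ldots,f_d$ is the standard basis of $\R^d$) span $\W{j}{\omega}$, and since $\dim\W{j}{\omega}=m_j$ for a.e.\ $\omega$, one can measurably select, for each $\omega$, the first $m_j$ of these columns that are linearly independent: the set of $\omega$ for which a given subcollection is a basis is measurable (it is defined by the non-vanishing of a determinant, a continuous function of $\Pi_j(\omega)$), so partitioning $\Omega$ into countably many measurable pieces according to which subcollection is chosen, and applying ordinary Gram--Schmidt on each piece (Gram--Schmidt being a continuous, hence measurable, map on the open set of linearly independent tuples), yields measurable orthonormal vectors $e^j_1(\omega),\ldots,e^j_{m_j}(\omega)$ spanning $\W{j}{\omega}$. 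Since the eigenspaces of a symmetric matrix for distinct eigenvalues are mutually orthogonal, the union over $j$ of these vectors is automatically an orthonormal basis of $\R^d=\bigoplus_j\W{j}{\omega}$, and each $e^j_i(\omega)$ lies in the $\mu_j$-eigenspace by construction.

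The main obstacle is the bookkeeping required to make the selection genuinely measurable rather than merely pointwise: one must verify that ``the first $m_j$ linearly independent columns of $\Pi_j(\omega)$'' is a measurable choice, which I would handle by the explicit countable partition described above (indexed by which $m_j$-subset of the $d$ columns is selected, with a fixed tie-breaking rule such as lexicographic order). An alternative, perhaps cleaner, route is to invoke a measurable selection theorem directly: the map sending $\omega$ to the orthonormal-frame bundle over $\W{j}{\omega}$ has closed graph and nonempty closed values in the compact Stiefel manifold, so the Kuratowski--Ryll-Nardzewski selection theorem (or the von Neumann selection theorem for analytic sets) furnishes a measurable section $\omega\mapsto(e^j_1(\omega),\ldots,e^j_{m_j}(\omega))$; I would mention this as the conceptual reason the lemma holds and use the explicit Gram--Schmidt construction for a self-contained proof. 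Everything else is routine.
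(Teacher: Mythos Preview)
Your proposal is correct and reaches the same conclusion as the paper, but the technical route to a spanning set of each eigenspace is different. The paper works directly with $B(\omega)-\mu_j I$: it argues that a single step of row-reduction is a piecewise-continuous (hence measurable) map of matrices, iterates to obtain the row-reduced echelon form $RRE(B(\omega)-\mu_j I)$ as a measurable function of $\omega$, reads off a spanning set for the kernel measurably, and then applies Gram--Schmidt. Your approach instead writes down the spectral projection $\Pi_j(\omega)=\prod_{k\ne j}(B(\omega)-\mu_k I)/(\mu_j-\mu_k)$ as an explicit polynomial in $B(\omega)$, which gives measurability of $\Pi_j$ in one line, and then you measurably extract an independent subset of its columns via a finite case split on non-vanishing minors before applying Gram--Schmidt. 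Your route exploits the hypothesis that the eigenvalues $\mu_j$ are fixed more directly and is arguably cleaner; the paper's row-reduction argument is more elementary and would adapt without change to situations where the eigenvalues themselves vary measurably. Both finish with Gram--Schmidt and the automatic orthogonality of eigenspaces of a symmetric matrix, so the final assembly is identical.
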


\begin{proof}
  Consider the map $R$ that takes a matrix and applies a single step of a
  row-reduction algorithm (e.g.~find the first column that is not in
  row-reduced echelon form; transpose rows to put a non-zero entry in
  the correct place; divide so the leading coefficient is 1; subtract
  multiples of that row from all of the others; repeat) or does
  nothing in the case that the matrix is already in row-reduced
  echelon form. 
  The domains of the pieces are measurable and therefore $R$ is measurable.
  For all matrices $A$, $R^n(A)$ is a
  convergent sequence so the limit $RRE(A)$ is a measurable function
  of the matrix.

  A collection of vectors spanning the kernel of a row-reduced matrix
  may be obtained in a measurable way.
  % by systematically letting one
  %of the free variables take the value 1 while the other free
  %variables take the value 0.
  These vectors may then be measurably converted to an orthonormal set
  by applying the Gram--Schmidt orthogonalization algorithm.

  We apply this by taking a symmetric matrix $B$ with eigenvalues
  $\mu_1,\ldots,\mu_\ell$ with multiplicities $m_1,\ldots,m_\ell$.
  We find an orthogonal set of vectors with each of the eigenvalues by
  applying the above procedures to $B-\mu_j I$. Since all
  operations are measurable the proof is complete.
\end{proof}

\begin{lemma}\label{lem:subrev}
  Let $\sigma\colon \Omega\to\Omega$ be an invertible ergodic
  measure-preserving transformation and let $(f_n)_{n=1}^\infty$ be a
  subadditive sequence of functions (that is a sequence such that for
  every $\omega\in\Omega$ and each $m$ and $n$, $f_{n+m}(\omega)\le
  f_n(\omega)+ f_m(\sigma^n\omega)$). Assume further that
  $\max(f_1,0)$ is an $L^1$ function.  Then there is a
  $C\in[-\infty,\infty)$ such that for almost every $\omega$ one has
  $f_n(\omega)/n\to C$ and $f_n(\sigma^{-n}\omega)/n\to C$.
\end{lemma}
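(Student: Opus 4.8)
The plan is to derive both limits from Kingman's subadditive ergodic theorem: once applied to $(f_n)$ run forward under $\sigma$, and once applied to a reversed cocycle run forward under $\sigma^{-1}$.

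First I would apply Kingman's subadditive ergodic theorem directly to $(f_n)$ over the ergodic measure-preserving transformation $\sigma$. Since $\max(f_1,0)\in L^1$, the theorem (in the form permitting a limit in $[-\infty,\infty)$) produces a $\sigma$-invariant limit which, by ergodicity of $\sigma$, is $\mu$-a.e.\ equal to the constant
$$
C=\inf_{n\ge 1}\frac1n\int f_n\,\mathrm{d}\mu\in[-\infty,\infty),
$$
with $f_n(\omega)/n\to C$ for a.e.\ $\omega$. This settles the first assertion.

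For the second assertion, set $g_n(\omega):=f_n(\sigma^{-n}\omega)$. Using subadditivity of $(f_n)$ with $\eta=\sigma^{-(n+m)}\omega$ and the inequality $f_{m+n}(\eta)\le f_m(\eta)+f_n(\sigma^m\eta)$ gives
$$
g_{n+m}(\omega)=f_{n+m}(\sigma^{-(n+m)}\omega)\le f_m(\sigma^{-(n+m)}\omega)+f_n(\sigma^{-n}\omega)=g_m(\sigma^{-n}\omega)+g_n(\omega),
$$
which is precisely the subadditivity inequality for the transformation $\sigma^{-1}$ (not $\sigma$). Moreover $\max(g_1,0)=\max(f_1,0)\circ\sigma^{-1}\in L^1$ since $\mu$ is $\sigma$-invariant, and $\sigma^{-1}$ is again ergodic and measure-preserving. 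Applying Kingman's theorem to $(g_n)$ over $\sigma^{-1}$ yields $g_n(\omega)/n\to C'$ a.e., where $C'=\inf_{n\ge1}\frac1n\int g_n\,\mathrm{d}\mu$. But $\int g_n\,\mathrm{d}\mu=\int f_n\circ\sigma^{-n}\,\mathrm{d}\mu=\int f_n\,\mathrm{d}\mu$ by invariance of $\mu$, so $C'=C$. Hence $f_n(\sigma^{-n}\omega)/n=g_n(\omega)/n\to C$ a.e., as required.

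The only delicate point is the bookkeeping for the reversed cocycle: one must check that $(g_n)$ satisfies the subadditivity inequality with respect to $\sigma^{-1}$, and — crucially — that the two limiting constants coincide. The latter holds because the time-averages $\frac1n\int f_n\,\mathrm{d}\mu$ are unchanged under precomposition with a measure-preserving map, so both invocations of the subadditive ergodic theorem produce the same infimum. The possibility $C=-\infty$ needs no separate treatment, being already covered by the standard statement of the theorem.
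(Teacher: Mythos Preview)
Your proof is correct and follows essentially the same approach as the paper: define $g_n(\omega)=f_n(\sigma^{-n}\omega)$, check subadditivity with respect to $\sigma^{-1}$, and apply Kingman twice. The only minor difference is in identifying the two constants: the paper argues that $f_n/n$ and $g_n/n$ have the same distribution and hence the same limit in measure, whereas you use the integral formula $C=\inf_n\frac1n\int f_n\,\mathrm{d}\mu$ together with $\int g_n\,\mathrm{d}\mu=\int f_n\,\mathrm{d}\mu$; your variant is arguably cleaner.
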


\begin{proof}
  The fact that there is a $C$ such that $f_n/n\to C$ is Kingman's
  subadditive ergodic theorem. Letting $g_n(\omega)=f_n(\sigma^{-n}\omega)$,
  we see that $g_{n+m}(\omega)\le g_n(\omega)+g_m(\sigma^{-n}\omega)$ so that
  the subadditive ergodic theorem applies to $g_n$ also (with the
  measure-preserving transformation being $\sigma^{-1}$) and there is a
  constant $D$ such that $g_n(\omega)/n\to D$ for almost all $\omega$.

  Since $f_n/n$ converges pointwise to $C$ it also converges to $C$ in
  measure. Similarly $g_n/n$ converges in measure
  to $D$.  Since $f_n/n$ and $g_n/n$ have the same distribution, the
  constants to which they converge in measure must be equal.
\end{proof}

\begin{lemma}\label{lem:revexps}
  Let $\sigma\colon \Omega\to\Omega$ be an invertible ergodic
  measure-preserving transformation and let $A\colon\Omega\to M_d(\R)$
  be a measurable family of matrices satisfying
  \begin{align*}
  \int\log^+\|A(\omega)\|\,\mathrm{d}\mu(\omega)<\infty.
  \end{align*}
  Let $S$ be
  the multiset of characteristic exponents. Given $\omega\in\Omega$,
  let $SV^{(n)}(\omega)$ be the multiset of logarithms of the
  $n$th roots of the
  singular values of $A^{(n)}(\sigma^{-n}\omega)$. Then for almost
  every $\omega$, $SV^{(n)}(\omega)\to S$.
\end{lemma}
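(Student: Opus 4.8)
The plan is to identify the multiset $S$ of characteristic exponents as a sum of limits of subadditive sequences applied to exterior powers, and then to apply Lemma \ref{lem:subrev} to pass from the forward direction to the $\sigma^{-n}$ direction. Recall the standard fact that the characteristic exponents of the cocycle $A$ are governed by the singular values: if $s_1^{(n)}(\omega)\geq\cdots\geq s_d^{(n)}(\omega)$ denote the singular values of $A^{(n)}(\omega)=A(\sigma^{n-1}\omega)\cdots A(\omega)$, then for each $k$ the quantity $\log(s_1^{(n)}(\omega)\cdots s_k^{(n)}(\omega))=\log\|\wedge^k A^{(n)}(\omega)\|$ is (the logarithm of) the operator norm of the $k$th exterior power cocycle, which is a subadditive sequence in $n$; moreover $\log^+\|\wedge^k A(\omega)\|\leq k\log^+\|A(\omega)\|$ is integrable. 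By Kingman's theorem the normalized limit $\lim_n\frac1n\log\|\wedge^k A^{(n)}(\omega)\|$ exists a.e.\ and equals $\lambda_1+\cdots+\lambda_k$ (with multiplicities), so the ordered multiset $\frac1n SV^{(n)}_{\mathrm{fwd}}(\omega)$ of forward $n$th-root log singular values converges a.e.\ to $S$ by taking successive differences.

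Next I would run exactly the same argument with $\sigma$ replaced by $\sigma^{-1}$, or more precisely apply Lemma \ref{lem:subrev} to each of the subadditive sequences $f_n^{(k)}(\omega):=\log\|\wedge^k A^{(n)}(\omega)\|$. The lemma tells us that not only does $f_n^{(k)}(\omega)/n$ converge a.e.\ to the constant $C_k=\lambda_1+\cdots+\lambda_k$, but also $f_n^{(k)}(\sigma^{-n}\omega)/n\to C_k$ a.e. Now observe that $f_n^{(k)}(\sigma^{-n}\omega)=\log\|\wedge^k A^{(n)}(\sigma^{-n}\omega)\|$ is precisely the sum of the $k$ largest log singular values of $A^{(n)}(\sigma^{-n}\omega)$. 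Dividing by $n$, this is the sum of the $k$ largest elements of $SV^{(n)}(\omega)$. Since this holds simultaneously (on a single full-measure set, obtained by intersecting over $k=1,\dots,d$) for every $k$, taking successive differences shows that the individual ordered elements of $SV^{(n)}(\omega)$ converge to $\lambda_1\geq\lambda_2\geq\cdots\geq\lambda_d$, i.e.\ to the multiset $S$, which is the claim.

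The one point requiring a little care is the handling of exponents equal to $-\infty$, i.e.\ the possibility that some singular values decay superexponentially or that $A^{(n)}(\sigma^{-n}\omega)$ is eventually singular; but Lemma \ref{lem:subrev} is stated allowing $C\in[-\infty,\infty)$, so the argument goes through verbatim with the convention that the corresponding entries of $SV^{(n)}(\omega)$ tend to $-\infty$, and convergence of multisets is interpreted accordingly. The main obstacle, such as it is, is purely bookkeeping: verifying that the subadditive ergodic theorem really computes $\lambda_1+\cdots+\lambda_k$ in the forward direction (this is classical, going through the singular value / polar decomposition description of the MET as in \cite[Theorem 3.4.1]{A98}), and then checking that ``$f_n(\sigma^{-n}\omega)$'' unwinds to exactly the log singular values of the reversed product $A^{(n)}(\sigma^{-n}\omega)$ rather than of some transpose or differently-ordered product. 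Once the indexing is pinned down, Lemma \ref{lem:subrev} does all the real work.
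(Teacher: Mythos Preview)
Your proof is correct and uses the same two ingredients as the paper's argument: exterior-power norms to access the partial sums $\lambda_1+\cdots+\lambda_k$, and Lemma~\ref{lem:subrev} to transfer the limit from $A^{(n)}(\omega)$ to $A^{(n)}(\sigma^{-n}\omega)$. The only organisational difference is that the paper first introduces the transposed cocycle $\omega\mapsto A^{\mathrm T}(\omega)$ over $\sigma^{-1}$, applies the MET to it to obtain a limiting multiset $S'$, observes $SV'^{(n)}(\omega)=SV^{(n)}(\omega)$ by transpose-invariance of singular values, and then uses Lemma~\ref{lem:subrev} on the exterior powers only to show $S=S'$; you instead apply Lemma~\ref{lem:subrev} directly to each $f_n^{(k)}(\omega)=\log\|\wedge^k A^{(n)}(\omega)\|$ and recover the individual exponents by successive differences, bypassing the transposed cocycle altogether. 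Your route is marginally more direct, while the paper's makes the role of the ``reversed'' MET more explicit; substantively they are the same argument.
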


\begin{proof}
  Consider the family $\omega\mapsto A^\mathrm{T}(\omega)$ with respect to the
  dynamical system $\sigma^{-1}$. Let the characteristic exponents be
  the multiset $S'$. This means that letting $SV'^{(n)}(\omega)$ be
  the multiset of $n$th roots of singular values of
  $A^\mathrm{T}(\sigma^{-n}\omega)\ldots A^\mathrm{T}(\sigma^{-1}\omega)$, one has
  $SV'^{(n)}(\omega)\to S'$ for almost every $\omega\in\Omega$ by the MET.
  Since singular values are preserved by
  taking transposes we see that $SV'^{(n)}(\omega)=SV^{(n)}(\omega)$.
  Thus it suffices to prove $S=S'$. To see this, note that
  $(1/n)\log\|\bigwedge^kA^{(n)}(\omega)\|$ converges to the sum of
  the largest $k$ members of $S$, and \mbox{$(1/n)\log\|\bigwedge^k
  A^\mathrm{T}(\sigma^{-n}\omega)\ldots A^\mathrm{T}(\sigma^{-1}\omega)\|$} converges to
  the sum of the largest $k$ members of $S'$, but these limits are
  equal by Lemma \ref{lem:subrev}.
\end{proof}

\begin{proof}[Proof of Theorem \ref{thm:main}]
	In the course of the proof we shall repeatedly use the symbol $C$ to denote 
	various constants depending only on $\omega$.

  We write $A^{(n)}(\omega)$ for the matrix product
  $A(\sigma^{n-1}\omega)\ldots A(\omega)$. From standard proofs of
  the MET, we have that
  $[A^{(n)}(\omega)^\mathrm{T}A^{(n)}(\omega)]^{1/(2n)}$ is convergent to a
  positive semi-definite matrix $B(\omega)$, for almost all $\omega$,
  with eigenvalues
  $e^{\lambda_1}>\ldots>e^{\lambda_\ell}$ with the correct multiplicities.
  We therefore let $(e^j_i(\omega))$ be as in Lemma \ref{lem:frame} and
  let $\U{j}{\omega}$ be the subspace of $\R^d$ spanned by
  $\{e^j_i(\omega)\colon 1\le i\le m_j\}$. The standard proofs of the
  MET show that
  if one lets $\V{j}{\omega}=\bigoplus_{i=j}^\ell \U{i}{\omega}$
  then the vector spaces $\V{j}{\omega}$ satisfy
  \begin{enumerate}
  \item $A(\omega) \V{j}{\omega}\subseteq \V{j}{\sigma\omega}$;
  \item For all $v\in \V{j}{\omega}\setminus \V{j+1}{\omega}$,
    $\lim_{n\to\infty}(1/n)\log \|A^{(n)}(\omega)v\|\to
    \lambda_j$;
  \end{enumerate}

  For $j<\ell$, let $\Wpush{j}{\omega}{n}=A^{(n)}(\sigma^{-n}\omega)
  \U{j}{\sigma^{-n}\omega}$ and let
  $\W{\ell}{\omega}=\U{\ell}{\omega}$. Then we claim the following:
    \begin{enumerate}
    \item \label{it:conv}For $j<\ell$, $\Wpush{j}{\omega}{n}$
      converges to an $m_j$-dimensional subspace $\W{j}{\omega}$;
      \item \label{it:inv}$A(\omega)\W{j}{\omega}\subseteq \W{j}{\sigma\omega}$;
      \item \label{it:growth}If $x\in \W{j}{\omega}\setminus\{0\}$,
        then $(1/n)\log\|A^{(n)}(\omega)x\|\to\lambda_j$.
      \item \label{it:dsum}$\V{j+1}{\omega}\oplus
        \W{j}{\omega}=\V{j}{\omega}$.
    \end{enumerate}
    Notice that
    $\Wpush{j}{\sigma\omega}{n+1}=A(\omega)\Wpush{j}{\omega}{n}$ so
    that in the case $j<\ell$, \eqref{it:inv} follows from
    \eqref{it:conv} and the definition. For $j=\ell$, \eqref{it:inv}
    and \eqref{it:growth} follow from the standard MET proofs.

    Fix a $j<\ell$ and consider a basis $B_0(\omega)=\{e_k^i(\omega)
    \colon k> j,\ i\le m_k\}$ for $\V{j+1}{\omega}$ and a basis
    $B_1(\omega)=\{e_j^i(\omega)\colon i\le m_j\}$ for
    $\U{j}{\omega}$.
    The union of $B_0(\omega)$ and $B_1(\omega)$
    gives an orthonormal basis for $\V{j}{\omega}$. Since
    $A(\omega)\V{j+1}{\omega}\subset \V{j+1}{\sigma\omega}$ and
    $A(\omega)\V{j}{\omega}\subset \V{j}{\sigma\omega}$, it follows
    that if we express the linear transformation represented by
    $A(\omega)$ with respect to the bases $B_1(\omega)\cup
    B_0(\omega)$ and $B_1(\sigma\omega)\cup B_0(\sigma\omega)$, the
    matrix is of the form
    $$
    L(\omega)=\begin{pmatrix}
      A_{11}(\omega)&0\\
      A_{10}(\omega)&A_{00}(\omega)
    \end{pmatrix},
    $$
  where if $V_{j+1}(\omega)$ is of dimension $q=m_{j+1}+\ldots+m_\ell$,
  the matrices $A_{11}(\omega)$, $A_{10}(\omega)$ and $A_{00}(\omega)$ have
  dimensions $m_j\times m_j$, $q\times m_j$ and $q\times q$ respectively.

  By definition, $L^{(n)}(\omega)=L(\sigma^{n-1}\omega)\ldots L(\omega)$.
  By analogy with the above we name the components of this matrix as
  follows:
  $$
    L^{(n)}(\omega)=
    \begin{pmatrix}
      A_{11}^{(n)}(\omega)&0\\
      A_{10}^{(n)}(\omega)&A_{00}^{(n)}(\omega).
    \end{pmatrix}
  $$

  We will need the following matrix identities:

  \begin{claim}\label{label:claimtwo}
  With $A_{ij}^{(n)}$ defined as above we have
  \begin{align}
    A_{11}^{(n)}(\omega)&=A_{11}(\sigma^{n-1}\omega)\ldots A_{11}(\omega)\\
    A_{00}^{(n)}(\omega)&=A_{00}(\sigma^{n-1}\omega)\ldots A_{00}(\omega)\\
    A_{10}^{(n)}(\omega)&=\sum_{k=0}^{n-1}
    A_{00}^{(k)}(\sigma^{n-k}\omega)A_{10}(\sigma^{n-k-1}\omega)
    A_{11}^{(n-k-1)}(\omega).\label{eq:crossterms}
  \end{align}
\end{claim}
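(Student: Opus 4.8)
The plan is to prove the three identities simultaneously by induction on $n$, exploiting the block lower-triangular form of $L(\omega)$. For the base case $n=1$ the first two identities hold by definition, while the right-hand side of \eqref{eq:crossterms} reduces to the single term $A_{00}^{(0)}(\sigma\omega)\,A_{10}(\omega)\,A_{11}^{(0)}(\omega)=A_{10}(\omega)$, once we adopt the usual convention that $A^{(0)}$ denotes the identity matrix of the appropriate size.

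For the inductive step I would write $L^{(n+1)}(\omega)=L(\sigma^n\omega)\,L^{(n)}(\omega)$ and multiply out the two block matrices, using the inductive hypothesis to describe the blocks of $L^{(n)}(\omega)$. The product of two block lower-triangular matrices is again block lower-triangular, and reading off the blocks gives at once
$$A_{11}^{(n+1)}(\omega)=A_{11}(\sigma^n\omega)A_{11}^{(n)}(\omega),\qquad A_{00}^{(n+1)}(\omega)=A_{00}(\sigma^n\omega)A_{00}^{(n)}(\omega),$$
which establishes the first two identities, together with the recursion
$$A_{10}^{(n+1)}(\omega)=A_{10}(\sigma^n\omega)A_{11}^{(n)}(\omega)+A_{00}(\sigma^n\omega)A_{10}^{(n)}(\omega)$$
for the off-diagonal block.

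The one step requiring care is deriving \eqref{eq:crossterms} at level $n+1$ from this recursion. I would substitute the inductive formula for $A_{10}^{(n)}(\omega)$ into the second summand, and then in each term absorb the leading factor by means of the cocycle identity $A_{00}(\sigma^n\omega)\,A_{00}^{(k)}(\sigma^{n-k}\omega)=A_{00}^{(k+1)}(\sigma^{n-k}\omega)$ (this is the semigroup property of the product cocycle, applied at the base point $\sigma^{n-k}\omega$, since $\sigma^k$ of that point is $\sigma^n\omega$). Reindexing the resulting sum by $k\mapsto k+1$ turns the range $0\le k\le n-1$ into $1\le k\le n$ and shifts the exponents so that the generic summand becomes $A_{00}^{(k)}(\sigma^{(n+1)-k}\omega)A_{10}(\sigma^{n-k}\omega)A_{11}^{(n-k)}(\omega)$, while the remaining term $A_{10}(\sigma^n\omega)A_{11}^{(n)}(\omega)$ is exactly the $k=0$ contribution to the claimed sum at level $n+1$. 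Combining the two yields \eqref{eq:crossterms} with $n$ replaced by $n+1$, completing the induction.

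I do not expect any genuine obstacle here: the argument is purely mechanical matrix bookkeeping, and the only place where a slip is likely is in keeping the shift exponents consistent through the reindexing and in checking that the $A_{00}$ cocycle identity is invoked at the correct base point.
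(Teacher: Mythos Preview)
Your proposal is correct and follows essentially the same approach as the paper, which simply states that the first two equalities are immediate and the third follows by induction on $n$. Your write-up supplies the routine details of that induction, including the block recursion and the reindexing, and there are no gaps.
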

\begin{proof}
  The first two equalities are immediate and the third follows by
  induction on $n$.
\end{proof}

\begin{claim}\label{label:Kingman}
  For almost every $\omega\in\Omega$,
  $\log \|A^{(n)}_{00}(\omega)\|\to\lambda_{j+1}$ as $n\to\infty$.
\end{claim}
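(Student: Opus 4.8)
The statement to establish is that $\tfrac1n\log\|A_{00}^{(n)}(\omega)\|\to\lambda_{j+1}$ for almost every $\omega$. The plan is to recognise $A_{00}$ as itself a matrix cocycle over $\sigma$ satisfying the hypotheses of the Multiplicative Ergodic Theorem, to identify its Lyapunov spectrum, and then to read off the norm growth rate from the MET.

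First I would note that, since $B_0(\omega)$ is an \emph{orthonormal} basis of $V_{j+1}(\omega)$ for almost every $\omega$ (Lemma \ref{lem:frame}), the matrix $A_{00}(\omega)$ is the coordinate representation, with respect to the orthonormal bases $B_0(\omega)$ and $B_0(\sigma\omega)$, of the restriction $A(\omega)|_{V_{j+1}(\omega)}$ --- this restriction is well defined because $A(\omega)V_{j+1}(\omega)\subseteq V_{j+1}(\sigma\omega)$. Consequently $\|A_{00}^{(n)}(\omega)\|$ equals the operator norm of $A^{(n)}(\omega)|_{V_{j+1}(\omega)}$, and $\|A_{00}(\omega)\|\le\|A(\omega)\|$, so $\log^+\|A_{00}(\cdot)\|$ is integrable; together with the measurability of $\omega\mapsto B_0(\omega)$ from Lemma \ref{lem:frame}, this shows $A_{00}$ is a genuine cocycle to which the MET applies. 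By Claim \ref{label:claimtwo} the iterates of this cocycle are exactly $A_{00}^{(n)}(\omega)=A_{00}(\sigma^{n-1}\omega)\cdots A_{00}(\omega)$.

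Next I would identify the Lyapunov spectrum of $A_{00}$. For $i\ge j+1$ the subspaces $V_i(\omega)\subseteq V_{j+1}(\omega)$ are invariant under $A(\omega)$, and by the growth property from the standard MET, every nonzero $v\in V_i(\omega)\setminus V_{i+1}(\omega)$ satisfies $\tfrac1n\log\|A^{(n)}(\omega)v\|\to\lambda_i$; taking the union over $i=j+1,\dots,\ell$ (with $V_{\ell+1}(\omega)=\{0\}$) accounts for every nonzero vector of $V_{j+1}(\omega)$. Transferred to $\R^q$ via the coordinates $B_0$, these $V_i$ thus form an invariant flag for $A_{00}$ whose strata carry the exponents $\lambda_{j+1}>\cdots>\lambda_\ell$ with multiplicities $m_{j+1},\dots,m_\ell$. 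Hence the Lyapunov spectrum of $A_{00}$ is precisely $\{\lambda_{j+1},\dots,\lambda_\ell\}$ and its top exponent is $\lambda_{j+1}$. The norm statement of the MET (equivalently, Kingman's theorem applied to the subadditive sequence $n\mapsto\log\|A_{00}^{(n)}(\omega)\|$) then yields $\tfrac1n\log\|A_{00}^{(n)}(\omega)\|\to\lambda_{j+1}$ for a.e.\ $\omega$; the degenerate case $\lambda_{j+1}=-\infty$ (possible only when $j+1=\ell$) is allowed since Kingman's theorem permits the limit $-\infty$.

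The step I expect to be the real content is the \emph{upper} bound $\limsup\tfrac1n\log\|A_{00}^{(n)}(\omega)\|\le\lambda_{j+1}$: every individual vector of $V_{j+1}(\omega)$ grows no faster than $e^{n\lambda_{j+1}}$ by the growth property, but passing from this pointwise statement to a bound on the operator norm requires uniformity over the unit sphere of $V_{j+1}(\omega)$, whose orthonormal basis $B_0(\omega)$ varies measurably with $\omega$. Invoking the MET for the subcocycle $A_{00}$ (or, equivalently, Kingman's subadditive ergodic theorem, using that $\log\|A^{(n)}(\omega)|_{V_{j+1}(\omega)}\|$ is subadditive because $A^{(n)}(\omega)V_{j+1}(\omega)\subseteq V_{j+1}(\sigma^n\omega)$) is precisely what supplies this uniformity. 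An alternative route to the upper bound is to compare $[A_{00}^{(n)}(\omega)^{\mathrm T}A_{00}^{(n)}(\omega)]^{1/2n}$ with the limiting matrix $B$ from the standard MET, using that $V_{j+1}(\omega)$ is exactly the sum of the eigenspaces of $B(\omega)$ with eigenvalues at most $e^{\lambda_{j+1}}$.
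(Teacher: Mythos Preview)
Your proof is correct and follows the same reduction as the paper --- identify $\|A_{00}^{(n)}(\omega)\|$ with the operator norm of $A^{(n)}(\omega)$ restricted to $V_{j+1}(\omega)$ via the orthonormal bases $B_0$ --- but you take a heavier route to finish. The paper does not re-invoke the MET or Kingman for the subcocycle $A_{00}$; instead it argues directly from the basis vectors: for each $i>j$ and $1\le k\le m_i$ the MET gives $(1/n)\log\|A^{(n)}(\omega)e^i_k(\omega)\|\to\lambda_i\le\lambda_{j+1}$, and since any unit $v\in V_{j+1}(\omega)$ is a combination $\sum c_{i,k}e^i_k(\omega)$ with $|c_{i,k}|\le 1$, the triangle inequality yields $\|A^{(n)}(\omega)v\|\le\sum_{i,k}\|A^{(n)}(\omega)e^i_k(\omega)\|$, so $\limsup(1/n)\log\|A_{00}^{(n)}(\omega)\|\le\lambda_{j+1}$; the matching lower bound comes from the single vector $e^{j+1}_1(\omega)$. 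Thus the uniformity over the unit sphere that worried you is automatic once the growth of each of the finitely many basis vectors is known --- no second application of the MET is needed. Your route via the Lyapunov spectrum of the subcocycle $A_{00}$ is perfectly valid and packages the argument cleanly (and would generalise better, e.g.\ to infinite-dimensional settings), but for this finite-dimensional claim it is more machinery than the paper uses.
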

\begin{proof}
  One has for each $i>j$ and $1\le k\le m_i$,
  $(1/n)\log \|A^{(n)}(\omega)e^i_k\|\to\lambda_i$, by the MET.
  It follows that considering $A^{(n)}(\omega)$ as a linear map on
  $\V{j+1}{\omega}$,
  $(1/n)\log\|A^{(n)}(\omega)\vert_{\V{j+1}{\omega}}\|\to\lambda_{j+1}$.
  Thus
  \begin{align*}
  (1/n)\log\|A_{00}^{(n)}(\omega)\|\to\lambda_{j+1}.
  \end{align*}
\end{proof}

\begin{claim}\label{claim:backwards}
For every $\epsilon>0$ and for almost every $\omega\in\Omega$, there is
$D_1(\omega)$ such that $\|A_{00}^{(n)}(\sigma^{-n}\omega)\|\le
D_1(\omega)e^{n(\lambda_{j+1}+\epsilon)}$ for all $n\geq 0$.
\end{claim}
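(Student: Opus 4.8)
The plan is to deduce this backward bound from the forward statement of Claim~\ref{label:Kingman} by feeding the sequence $f_n(\omega):=\log\|A_{00}^{(n)}(\omega)\|$ into the two-sided subadditive ergodic theorem of Lemma~\ref{lem:subrev}. First I would record that $(f_n)$ is subadditive: splitting off the last $m$ factors in the product expression for $A_{00}^{(n+m)}(\omega)$ coming from Claim~\ref{label:claimtwo} gives $A_{00}^{(n+m)}(\omega)=A_{00}^{(m)}(\sigma^n\omega)\,A_{00}^{(n)}(\omega)$, so submultiplicativity of the operator norm yields $f_{n+m}(\omega)\le f_n(\omega)+f_m(\sigma^n\omega)$, which is precisely the subadditivity hypothesis. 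Next I would check the integrability hypothesis $\max(f_1,0)\in L^1$: since the orthonormal bases $B_1(\omega)\cup B_0(\omega)$ make $L(\omega)$ an isometric representative of $A(\omega)|_{V_j(\omega)}$, we have $\|L(\omega)\|\le\|A(\omega)\|$, and as $A_{00}(\omega)$ is a block of $L(\omega)$ this gives $\log^+\|A_{00}(\omega)\|\le\log^+\|A(\omega)\|$, which is integrable by hypothesis.

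Granting this, Lemma~\ref{lem:subrev} produces a constant $C\in[-\infty,\infty)$ with $f_n(\omega)/n\to C$ and $f_n(\sigma^{-n}\omega)/n\to C$ for almost every $\omega$. The forward limit is identified by Claim~\ref{label:Kingman} as $\lambda_{j+1}$, so $C=\lambda_{j+1}$, and hence for almost every $\omega$
$$
\frac{1}{n}\log\|A_{00}^{(n)}(\sigma^{-n}\omega)\|\longrightarrow\lambda_{j+1}.
$$

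To finish, fix such an $\omega$ and $\epsilon>0$. The convergence above supplies $N(\omega)$ with $\|A_{00}^{(n)}(\sigma^{-n}\omega)\|\le e^{n(\lambda_{j+1}+\epsilon)}$ for all $n\ge N(\omega)$, and then
$$
D_1(\omega):=\max\Bigl\{1,\ \max_{0\le n<N(\omega)}\|A_{00}^{(n)}(\sigma^{-n}\omega)\|\,e^{-n(\lambda_{j+1}+\epsilon)}\Bigr\}
$$
is a finite constant for which the asserted bound holds for every $n\ge0$. I do not expect a genuine obstacle here; the only points that need care are the integrability check above (so that Lemma~\ref{lem:subrev}, and not merely Kingman's theorem, applies) and the degenerate case $\lambda_{j+1}=-\infty$, which can occur only when $j+1=\ell$: there $C=-\infty$, so $f_n(\sigma^{-n}\omega)/n\to-\infty$ and for every finite $c$ one obtains $\|A_{00}^{(n)}(\sigma^{-n}\omega)\|\le D_1(\omega)e^{nc}$ for all $n$, which is the natural reading of the statement and is all that is used in the remainder of the proof.
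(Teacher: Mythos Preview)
Your proposal is correct and follows essentially the same route as the paper: define $f_n(\omega)=\log\|A_{00}^{(n)}(\omega)\|$, note its subadditivity, identify the forward limit as $\lambda_{j+1}$ via Claim~\ref{label:Kingman}, and then invoke Lemma~\ref{lem:subrev} to transfer this to the backward limit $f_n(\sigma^{-n}\omega)/n\to\lambda_{j+1}$. You have simply been more careful than the paper in spelling out the integrability check, the explicit construction of $D_1(\omega)$, and the degenerate case $\lambda_{j+1}=-\infty$.
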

\begin{proof}
  Let $f_n(\omega)=\log \|A_{00}^{(n)}(\omega)\|$.  This is a
  sub-additive sequence of functions and $f_n(\omega)/n\to\log
  \lambda_{j+1}$ for almost every $\omega$ by Claim \ref{label:Kingman}.
  Applying Lemma \ref{lem:subrev} we see that
  $f_n(\sigma^{-n}\omega)/n\to\log \lambda_{j+1}$ for almost every
  $\omega$. The claim follows.
\end{proof}

\begin{claim}\label{claim:Birkhoff}
  For every $\epsilon>0$ and for almost every $\omega\in\Omega$, there is
  a $D_2(\omega)<\infty$ such that for all $n\geq 0$ one has
  $\|A_{10}(\sigma^{-n}\omega)\|\le D_2(\omega)e^{\epsilon n}$.
\end{claim}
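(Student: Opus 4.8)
The plan is to deduce this from a Borel--Cantelli / Birkhoff argument applied to the single function $g(\omega) := \log^+\|A_{10}(\omega)\|$, exploiting only the integrability hypothesis $\int \log^+\|A(\omega)\|\,\mathrm{d}\mu < \infty$. First I would observe that $A_{10}(\omega)$ is a submatrix (in the $B_1,B_0$ bases) of the matrix representing $A(\omega)$, so $\|A_{10}(\omega)\| \le C\|A(\omega)\|$ for a dimensional constant $C$; hence $g \in L^1(\mu)$. The goal is then the standard fact that an $L^1$ function grows subexponentially along almost every backward orbit: for a.e. $\omega$ and every $\epsilon > 0$, $g(\sigma^{-n}\omega) \le \epsilon n$ for all large $n$, from which $\|A_{10}(\sigma^{-n}\omega)\| \le e^{g(\sigma^{-n}\omega)} \le D_2(\omega)e^{\epsilon n}$ follows by absorbing the finitely many exceptional small $n$ into the constant $D_2(\omega)$.

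The key step is therefore: if $g \ge 0$ is in $L^1(\mu)$ and $\sigma$ is measure-preserving and invertible, then for a.e. $\omega$, $\limsup_{n\to\infty} g(\sigma^{-n}\omega)/n = 0$. I would prove this by Birkhoff's ergodic theorem applied to $g$ and the transformation $\sigma^{-1}$ (which is also measure-preserving and ergodic): the partial averages $\frac{1}{n}\sum_{k=0}^{n-1} g(\sigma^{-k}\omega)$ converge a.e. to the finite constant $\int g\,\mathrm{d}\mu$, and since the individual term $g(\sigma^{-(n-1)}\omega)/n = \big(\frac{1}{n}\sum_{k=0}^{n-1} - \frac{n-1}{n}\cdot\frac{1}{n-1}\sum_{k=0}^{n-2}\big)g(\sigma^{-k}\omega) \cdot$ (up to the obvious rearrangement) is a difference of two convergent-to-the-same-limit quantities times bounded coefficients, it tends to $0$. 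Equivalently, and perhaps more cleanly, one notes that $\sum_n \mu(\{g > \epsilon n\}) \le \epsilon^{-1}\int g\,\mathrm{d}\mu < \infty$ for each fixed $\epsilon$, so by Borel--Cantelli $\mu(\{g \circ \sigma^{-n} > \epsilon n \text{ i.o.}\}) = 0$ (using that $\sigma$ preserves $\mu$, so $\mu(\{g\circ\sigma^{-n} > \epsilon n\}) = \mu(\{g > \epsilon n\})$); intersecting over $\epsilon = 1/m$, $m \in \mathbb{N}$, gives the claim for a.e. $\omega$.

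I do not anticipate a serious obstacle here; this is a routine measure-theoretic lemma of the same flavour as Claim \ref{claim:backwards}, which was itself just Lemma \ref{lem:subrev} applied to a subadditive sequence. The only point requiring a word of care is that the bound must hold \emph{for all} $n \ge 0$, not merely for large $n$: this is handled by noting that for each fixed $\omega$ the finitely many values $\|A_{10}(\sigma^{-n}\omega)\|$ with $n$ below the (random) threshold are finite, so enlarging $D_2(\omega)$ to dominate $\max_{0 \le n \le N(\omega)} \|A_{10}(\sigma^{-n}\omega)\|e^{-\epsilon n}$ completes the estimate. It is also worth recording that $D_2(\omega)$ depends on $\epsilon$, consistent with the statement.
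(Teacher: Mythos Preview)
Your proposal is correct and follows essentially the same route as the paper: bound $\|A_{10}(\omega)\|$ by $\|A(\omega)\|$, invoke the standard fact that an $L^1$ function satisfies $g(\sigma^{-n}\omega)/n\to 0$ a.e., and absorb the finitely many small $n$ into $D_2(\omega)$. The paper's proof is simply terser, citing this as ``a standard corollary of Birkhoff's theorem'' without spelling out either the telescoping or the Borel--Cantelli argument you give; note also that since the bases $B_0(\omega)\cup B_1(\omega)$ are orthonormal, the dimensional constant $C$ in your bound $\|A_{10}(\omega)\|\le C\|A(\omega)\|$ can in fact be taken equal to $1$.
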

\begin{proof}
  By hypothesis $\log\|A(\omega)\|$ is an integrable function and hence
  by a standard corollary of Birkhoff's theorem one has
  $\log\|A(\sigma^{-n}\omega)\|/n\to 0$. It follows that 
  $\|A(\sigma^{-n}\omega)\|\le D_2(\omega)e^{\epsilon n}$ for a 
  suitable $D_2(\omega)$. Since
  $\|A_{10}(\omega)\|\le \|A(\omega)\|$ the result follows.
\end{proof}

\begin{claim}\label{claim:claimone}
  Under the above conditions,
  $\left({A_{11}^{(n)}(\omega)}^\mathrm{T}A_{11}^{(n)}(\omega)\right)^{1/(2n)}
  \longrightarrow e^{\lambda_j} I_{m_j}$.
\end{claim}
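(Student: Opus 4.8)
The plan is to show that $A_{11}^{(n)}(\omega)$ has the same exponential behaviour as the full cocycle restricted to $\V{j}{\omega}$ modulo the contribution of $\V{j+1}{\omega}$, so that its singular values all grow like $e^{n\lambda_j}$. Concretely, I would establish a two-sided estimate: an upper bound showing that every singular value of $A_{11}^{(n)}(\omega)$ is at most $Ce^{n(\lambda_j+\epsilon)}$, and a matching lower bound showing every singular value is at least $Ce^{n(\lambda_j-\epsilon)}$. Since $A_{11}$ is $m_j\times m_j$ and $\lambda_j$ has multiplicity $m_j$, pinning all $m_j$ singular values to the rate $e^{n\lambda_j}$ forces $(A_{11}^{(n)\mathrm T}A_{11}^{(n)})^{1/(2n)}\to e^{\lambda_j}I_{m_j}$.

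First I would handle the upper bound. Observe that $A_{11}(\omega)$ is the matrix of $A(\omega)$ restricted to $\U{j}{\omega}$ and then projected along $\V{j+1}{\sigma\omega}$ onto $\U{j}{\sigma\omega}$; more usefully, composing, $A_{11}^{(n)}(\omega)$ is the $\U{j}{\sigma^n\omega}$-component of $A^{(n)}(\omega)$ acting on $\U{j}{\omega}$. Since for any unit vector $v\in\U{j}{\omega}\subset\V{j}{\omega}\setminus\V{j+1}{\omega}$ one has $(1/n)\log\|A^{(n)}(\omega)v\|\to\lambda_j$ from property (ii) of the $\V{}{}$ filtration, and since projecting onto a subspace cannot increase the norm, we get $\|A_{11}^{(n)}(\omega)\|\le\|A^{(n)}(\omega)\vert_{\V{j}{\omega}}\|\le Ce^{n(\lambda_j+\epsilon)}$, using that the top exponent on $\V{j}{\omega}$ is exactly $\lambda_j$.

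The lower bound is the main obstacle, because it requires ruling out that the projection along $\V{j+1}{}$ annihilates growth — i.e.\ that a vector in $\U{j}{\omega}$ whose image under $A^{(n)}(\omega)$ grows at rate $\lambda_j$ could have its $\U{j}{\sigma^n\omega}$-component decay faster. The key is to invert the relation $A^{(n)}(\omega)=A_{00}^{(n)}\oplus A_{11}^{(n)}$ together with the cross term: writing $v\in\U{j}{\omega}$, the $\V{j+1}{}$-component of $A^{(n)}(\omega)v$ is $A_{10}^{(n)}(\omega)v$ and the $\U{j}{}$-component is $A_{11}^{(n)}(\omega)v$, and by Claim \ref{label:Kingman} the former grows at rate at most $\lambda_{j+1}<\lambda_j$. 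Hence $\|A^{(n)}(\omega)v\|^2=\|A_{10}^{(n)}v\|^2+\|A_{11}^{(n)}v\|^2$, and since the left side is $\ge Ce^{n(\lambda_j-\epsilon)}$ while the first term on the right is $\le Ce^{n(\lambda_{j+1}+\epsilon)}=o(e^{n(\lambda_j-\epsilon)})$, we conclude $\|A_{11}^{(n)}(\omega)v\|\ge Ce^{n(\lambda_j-\epsilon)}$ for all unit $v\in\U{j}{\omega}$ and $n$ large. This lower bound on the smallest singular value, combined with the upper bound on the largest, squeezes all singular values of $A_{11}^{(n)}(\omega)$ to the rate $e^{n\lambda_j}$; taking $(2n)$th roots of $A_{11}^{(n)\mathrm T}A_{11}^{(n)}$ and letting $\epsilon\to0$ gives convergence to $e^{\lambda_j}I_{m_j}$.
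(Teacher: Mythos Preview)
Your upper bound is fine, but the lower bound has a genuine gap. You assert that $\|A_{10}^{(n)}(\omega)v\|$ grows at rate at most $\lambda_{j+1}$ ``by Claim~\ref{label:Kingman}''. However, Claim~\ref{label:Kingman} controls $\|A_{00}^{(n)}(\omega)\|$, not $\|A_{10}^{(n)}(\omega)\|$. These are different objects: by \eqref{eq:crossterms},
\[
A_{10}^{(n)}(\omega)=\sum_{k=0}^{n-1}A_{00}^{(k)}(\sigma^{n-k}\omega)\,A_{10}(\sigma^{n-k-1}\omega)\,A_{11}^{(n-k-1)}(\omega),
\]
which contains the factors $A_{11}^{(n-k-1)}(\omega)$ whose growth rate is exactly what you are trying to determine. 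If you plug in only the upper bound $\|A_{11}^{(k)}(\omega)\|\le Ce^{k(\lambda_j+\epsilon)}$ (which you have), the sum is bounded by something of order $e^{n(\lambda_j+O(\epsilon))}$, not $e^{n(\lambda_{j+1}+\epsilon)}$. That bound is the same size as $\|A^{(n)}(\omega)u\|$ itself, so it cannot be subtracted off to isolate $\|A_{11}^{(n)}(\omega)v\|$ from below. The argument is circular as written.

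The paper breaks this circularity by a bootstrap. One first applies the MET to the cocycle $A_{11}$ itself, so that for a given $v$ there is \emph{some} exponent $\Lambda$ with $(1/n)\log\|A_{11}^{(n)}(\omega)v\|\to\Lambda$. Feeding this into \eqref{eq:crossterms} together with Claims~\ref{claim:backwards} and~\ref{claim:Birkhoff} gives, along a subsequence of $n$'s (those for which $D_1(\sigma^n\omega)$ is bounded), $\|A_{10}^{(n)}(\omega)v\|\le Cne^{n(\max(\Lambda,\lambda_{j+1})+2\epsilon)}$. Hence $\liminf(1/n)\log\|A^{(n)}(\omega)u\|\le\max(\Lambda,\lambda_{j+1})$. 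Since the left side equals $\lambda_j>\lambda_{j+1}$, one is forced to have $\Lambda\ge\lambda_j$; combined with your upper bound $\Lambda\le\lambda_j$, this pins $\Lambda=\lambda_j$. The missing ingredient in your argument is precisely this use of $\Lambda$ as an unknown to be solved for, rather than assuming the $A_{10}^{(n)}$ term is already small.
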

\begin{proof}
To see this it is sufficient to show that every non-zero vector in
$\U{j}{\omega}$ has growth rate $\lambda_j$. Let $u\in \U{j}{\omega}$ have
expansion $u=\sum_{i\le m_j} v_i e^j_i(\omega)$.

First we show that $A_{10}^{(n)}(\omega)v$ doesn't grow any faster than
$A_{11}^{(n)}(\omega)v$. Note that
$\|A^{(n)}(\omega)u\|^2=\|A_{11}^{(n)}(\omega)v\|^2+
\|A_{10}^{(n)}(\omega)v\|^2$ so that we have
$\|A_{10}^{(n)}(\omega)v\|+\|A_{11}^{(n)}(\omega)v\|$ grows at rate
$\lambda_{j}$. Applying the MET to $A_{11}^{(n)}(\omega)$, we see that
$A_{11}^{(n)}(\omega)v$ grows at some rate $\Lambda$. We will show that
$A_{10}^{(n)}(\omega)v$ grows at a rate no greater than
$\max(\Lambda,\lambda_{j+1})$. It will follow that $\Lambda=\lambda_{j}$.

Equality \eqref{eq:crossterms} gives
\begin{eqnarray*}
  \|A_{10}^{(n)}(\omega)v\|\le \sum_{k=0}^{n-1}
  \|A_{00}^{(k)}(\sigma^{n-k}\omega)\|\|A_{10}(\sigma^{n-k-1}\omega)\|
  \|A_{11}^{(n-k-1)}(\omega)v\|
\end{eqnarray*}
Fix an arbitrary $\epsilon>0$. Claim \ref{claim:backwards} shows that
$\|A_{00}^{(k)}(\sigma^{n-k}\omega)\|\le
D_1(\sigma^n\omega)e^{k(\lambda_{j+1}+\epsilon)}$. Using Claim
\ref{claim:Birkhoff} also, we see
\begin{eqnarray*}
  \|A_{10}^{(n)}(\omega)v\|\le
  D_1(\sigma^n\omega)D_2(\omega)\sum_{k=0}^{n-1}e^{k(\lambda_{j+1}+\epsilon)}
  e^{\epsilon(n-k-1)}e^{(\Lambda+\epsilon)(n-k-1)}.
\end{eqnarray*}
There exists $M$ such that $D_1(\omega)<M$ on a positive measure 
subset of $\Omega$. By the ergodicity of $\sigma$, there are 
infinitely many $n$ for which \mbox{$D_1(\sigma^n\omega)<M$}.  
For these $n$, the right hand side of the
inequality is bounded above by
$D_2(\omega)Mne^{n(\max(\lambda_{j+1},\Lambda)+2\epsilon)}$. It follows that 
\mbox{$\liminf\log \|A_{10}^{(n)}(\omega)v\|^{1/n}\le
\max(\lambda_{j+1},\Lambda)$} and thus
$$
\liminf\log\|A^{(n)}(\omega)u\|^{1/n}\le \max(\lambda_{j+1},\Lambda).
$$
Since on the other hand \mbox{$\lim\log\|A^{(n)}(\omega)u\|^{1/n}=\lambda_{j}$},
we conclude that $\Lambda\ge\lambda_j$. Since \mbox{$\|A^{(n)}(\omega)u\|\ge
\|A^{(n)}_{11}(\omega)v\|$} we have $\lambda_j\ge\Lambda$ so that
$\Lambda=\lambda_{j}$ as required.
\end{proof}

We now estimate
\begin{eqnarray*}
  g_n(\omega)=\max_{v\in S_1}\frac{\|A_{10}^{(n)}(\sigma^{-n}\omega)v\|}
  {\|A_{11}^{(n)}(\sigma^{-n}\omega)v\|},
\end{eqnarray*}
where $S_1$ denotes the unit sphere in $\R^{m_j}$. Note that by
scale-invariance one could equivalently define $g_n$ by taking the
maximum over $\R^{m_j}\setminus\{0\}$.

We have
\begin{align*}
  g_n(\omega)&=\max_{v\in S_1}\frac
  {\left\|\sum_{k=0}^{n-1}A_{00}^{(k)}(\sigma^{-k}\omega)A_{10}
      (\sigma^{-(k+1)}\omega)
      A_{11}^{(n-k-1)}(\sigma^{-n}\omega)v\right\|}
  {\left\|A_{11}^{(n)}(\sigma^{-n}\omega)v\right\|}\\
  &\le \sum_{k=0}^{n-1}\max_{v\in
    S_1}\frac{\left\|A_{00}^{(k)}(\sigma^{-k}\omega)
      A_{10}(\sigma^{-(k+1)}\omega)
      A_{11}^{(n-k-1)}(\sigma^{-n}\omega)v\right\|}
  {\left\|A_{11}^{(k+1)}(\sigma^{-(k+1)}\omega)
      A_{11}^{(n-k-1)}(\sigma^{-n}\omega)v\right\|}\\
  &=\sum_{k=0}^{n-1}\max_{u\in
    S_1}\frac{\left\|A_{00}^{(k)}(\sigma^{-k}\omega)
      A_{10}(\sigma^{-(k+1)}\omega)u\right\|}
  {\left\|A_{11}^{(k+1)}(\sigma^{-(k+1)}\omega)u\right\|}\\
  &\le\sum_{k=0}^{n-1} \frac{\max_{u\in
      S_1}\left\|A_{00}^{(k)}(\sigma^{-k}\omega)
      A_{10}(\sigma^{-(k+1)}\omega)u\right\|} {\min_{u\in
      S_1}\left\|A_{11}^{(k+1)}(\sigma^{-(k+1)}\omega)u\right\|}.
\end{align*}

Note that in the third line we are making use of the fact that
$A_{11}^{(n-k-1)}(\sigma^{-n}\omega)$ is invertible.

Let $\epsilon<(\lambda_j-\lambda_{j+1})/4$ be fixed for the remainder of
the proof. By Lemma \ref{lem:revexps} and Claim \ref{claim:claimone} the
$k$th roots of the singular values of $A_{11}^{(k)}(\sigma^{-k}\omega)$
all converge to $e^{\lambda_{j}}$.  It follows that there is a $C>0$
depending on $\omega$ such that for all $k$,
\begin{eqnarray}\label{eq:A11lower}
  \min_{u\in
    S_1}\left\|A_{11}^{(k+1)}(\sigma^{-(k+1)}\omega)u\right\|>
  Ce^{k(\lambda_{j}-\epsilon)}.
\end{eqnarray}
We remark that similar uniform lower bound estimates appear in the paper
of Barreira and Silva \cite{BS05}. Using Lemma \ref{lem:revexps}, Claim
\ref{claim:backwards} and Claim \ref{claim:Birkhoff} there exists a $C'$
depending on $\omega$ such that for all $k$,
\begin{eqnarray*}
  \max_{u\in S_1}\left\|A_{00}^{(k)}(\sigma^{-k}\omega)
    A_{10}(\sigma^{-(k+1)}\omega)u\right\|\le
  C'e^{k(\lambda_{j+1}+\epsilon)}e^{\epsilon k}.
\end{eqnarray*}
Combining the estimates we see
\begin{eqnarray*}
  g_n(\omega)\le\frac{C'}{C}\sum_{k=0}^{n-1}
  e^{k(\lambda_{j+1}-\lambda_j+3\epsilon)}.
\end{eqnarray*}
Since $3\epsilon<\lambda_j-\lambda_{j+1}$ it follows that defining
$M(\omega)=\sup_n g_n(\omega)$, one has $M(\omega)<\infty$ for almost all
$\omega$.

We define a distance $D$ between two subspaces of $\R^d$ of the same
dimension by the Hausdorff distance of their intersections with the
unit ball $B_1$ in $\R^d$. We now estimate
$D\left(\Wpush{j}{\omega}{n},\Wpush{j}{\omega}{m}\right)$ for $m>n$.

Let $x$ belong to the unit sphere of $\Wpush{j}{\omega}{n}$ (the distance
is always maximized by points on the boundary). Then
$x=A^{(n)}(\sigma^{-n}\omega)u$ for some $u\in \U{j}{\sigma^{-n}\omega}$.
Since for almost all $\omega$, the matrix
$A_{11}^{(m-n)}(\sigma^{-m}\omega)$ is invertible, there exists almost
surely a $u'\in \U{j}{\sigma^{-m}\omega}$ such that
$A^{(m-n)}(\sigma^{-m}\omega)u'=u+z$ where $z\in \V{j+1}{\sigma^{-n}\omega}$. 
Let $v'$ be the coordinates of $u'$ with respect to the basis $B_1(\omega)$. 
Then $\|A_{10}^{(m-n)}(\sigma^{-m}\omega)v'\|=\|z\|$ and 
$\|A_{11}^{(m-n)}(\sigma^{-m}\omega)v'\|=\|u\|$. It follows that 
$\|z\|\le M(\sigma^{-n}\omega)\|u\|$. Let
$y=A^{(m)}(\sigma^{-m}\omega)u'$ so that $y\in \Wpush{j}{\omega}{m}$. We
then have $y=x+A^{(n)}(\sigma^{-n}\omega)z$. By Claim
\ref{claim:backwards} we have
\begin{align}\label{eqn:UB}
  \|A^{(n)}(\sigma^{-n}\omega)z\|&\le Ce^{(\lambda_{j+1}+\epsilon)n}\|z\|  \nonumber\\
  &\le Ce^{(\lambda_{j+1}+\epsilon)n}M(\sigma^{-n}\omega)\|u\|
\end{align}
for a $C$ depending only on $\omega$. On the other hand,
\eqref{eq:A11lower} implies that
\begin{align}\label{eqn:LB}
  1=\|x\|=\|A^{(n)}(\sigma^{-n}\omega)v'\|\ge C'e^{(\lambda_{j}-\epsilon)n}\|u\|
\end{align}
for another $C'$ depending just on $\omega$. Let $K=C/C'$ and
$\alpha=\lambda_{j}-\lambda_{j+1}-2\epsilon>0$. 
Dividing (\ref{eqn:UB}) by (\ref{eqn:LB}) we see
\begin{eqnarray*}
  \|y-x\| =\|A^{(n)}(\sigma^{-n}\omega)z\|\le Ke^{-\alpha n} M(\sigma^{-n}\omega).
\end{eqnarray*}
The closest point of $\Wpush{j}{\omega}{m}\cap B_1$ to $x$ is just the
orthogonal projection of $x$ onto $\Wpush{j}{\omega}{m}$ (which lies in
$B_1$) so that the distance from $x$ to $\Wpush{j}{\omega}{m}\cap B_1$ is
bounded above by $\|y-x\|$ which in turn is bounded above by $Ke^{-\alpha
n} M(\sigma^{-n}\omega)$.

Conversely let $y\in B_1\cap \Wpush{j}{\omega}{m}$.
Then $y=A^{(m)}(\sigma^{-m}\omega)u'$ for some $u'\in
\U{j}{\sigma^{-m}\omega}$. Let $A^{(m-n)}(\sigma^{-m}\omega)u'$ be
decomposed into $u+z$ with $u\in \U{j}{\sigma^{-n}\omega}$ and $z\in
\V{j+1}{\sigma^{-n}\omega}$. Let $x=A^{(n)}(\sigma^{-n}\omega)u$. 
Since $\sup_n g_n(\omega)=M(\omega)$, we have 
$\|z\|\le M(\sigma^{-n}\omega)\|u\|$.
So $\|A^{(n)}(\sigma^{-n}\omega)z\|
\leq KM(\sigma^{-n}\omega)e^{-\alpha n}\|A^{(n)}(\sigma^{-n}\omega)u\|$. 
We also have 
\begin{align}\label{eqn:UBu}
\|A^{(n)}(\sigma^{-n}\omega)u\| &\le
\|A^{(n)}(\sigma^{-n}\omega)(u+z)\|+\|A^{(n)}(\sigma^{-n}\omega)z\|  \nonumber\\ 
&\le 1+KM(\sigma^{-n}\omega)e^{-\alpha n}\|A^{(n)}(\sigma^{-n}\omega)u\|.
\end{align}

So \mbox{$\|A^{(n)}(\sigma^{-n}\omega) u\|\le 
1/(1-KM(\sigma^{-n}\omega)e^{-\alpha n})$},
provided $KM(\sigma^{-n}\omega)e^{-\alpha n}<1$.
Combining this estimate with (\ref{eqn:UBu}) gives
\begin{align*}
\|x-y\|=\|A^{(n)}(\sigma^{-n}\omega)z\|\le
\frac{KM(\sigma^{-n}\omega)e^{-\alpha n}}
{1-KM(\sigma^{-n}\omega)e^{-\alpha n}}.
\end{align*} 

As before it follows that the closest point of $\Wpush{j}{\omega}{n}\cap
B_1$ to $y$ is at a distance at most $KM(\sigma^{-n}\omega)e^{-\alpha
n}/(1-KM(\sigma^{-n}\omega)e^{-\alpha n})$.  In particular, provided that
\mbox{$KM(\sigma^{-n}\omega)e^{-\alpha n}<1$}, we have
\begin{eqnarray*}
  D\left(\Wpush{j}{\omega}{n},\Wpush{j}{\omega}{m}\right)\le
  \frac{KM(\sigma^{-n}\omega)e^{-\alpha n}}
  {1-KM(\sigma^{-n}\omega)e^{-\alpha n}}.
\end{eqnarray*}
Obviously for $m,m'>n$ one then has
\begin{eqnarray*}
  D\left(\Wpush{j}{\omega}{m},\Wpush{j}{\omega}{m'}\right)\le
  \frac{2KM(\sigma^{-n}\omega)e^{-\alpha n}}{1-KM(\sigma^{-n}\omega)e^{-\alpha n}}.
\end{eqnarray*}
Since $M(\omega)$ is measurable and $\sigma$ is ergodic, there exist
for almost all $\omega$ arbitrarily large values of $n$ such that
$M(\sigma^{-n}\omega)<A$ for some fixed $A>0$. It follows that the
sequence of subspaces is Cauchy and hence convergent to a subspace
$\W{j}{\omega}$.

Let $x$ belong to the unit sphere of $W_j^{(n)}(\omega)$. Then 
$x=A^{(n)}(\sigma^{-n}\omega)u$. As before, writing $x$ as $y+z$ 
with $y\in U_j(\omega)$ and $z\in V_{j+1}(\omega)$, we have 
$\|z\|\le M(\omega)\|y\|$. Since $\|y\|^2+\|z\|^2=1$, we have 
$\|y\|^2(1+M(\omega)^2)\ge 1$ so that $\|y\|\ge 1/\sqrt{1+M(\omega)^2}=B$. 
Thus each point of the unit
sphere of $\W{j}{\omega}$ has a component in $\U{j}{\omega}$ of norm at
least $B$.  It follows that $\V{j}{\omega}=\V{j+1}{\omega}\oplus
\W{j}{\omega}$.

This completes the proof.
\end{proof}

\begin{acknowledgements}
The authors wish to thank Christopher Bose for very helpful 
discussions concerning Section 8.
GF and SL acknowledge support by the Australian Research Council 
Discovery Project DP0770289. AQ acknowledges partial support from 
the Natural Sciences and Engineering Research Council of Canada, 
and support while visiting the University of New South Wales from 
the Australian Mathematical Sciences Institute and from the 
Australian Research Council Centre of Excellence for Mathematics 
and Statistics of Complex Systems.
\end{acknowledgements}

\vspace{1cm}

\end{document}